\newcommand{\Z}{\mathbb{Z}}
\newcommand{\ord}{\text{ord}}
\newcommand{\R}{\mathbb{R}}
\newcommand{\Q}{\mathbb{Q}}
\newtheorem{theorem}{Theorem}[section]
\newtheorem{lemma}[theorem]{Lemma}
\newtheorem{conjecture}[theorem]{Conjecture}
\newtheorem{corollary}[theorem]{Corollary}
\theoremstyle{definition}
\newtheorem{example}[theorem]{Example}
\newtheorem{remark}[theorem]{Remark}
\newtheorem{definition}[theorem]{Definition}
\newtheorem{question}[theorem]{Question}
\def\frak{\relaxnext@\ifmmode\let\next\frak@\else
	\def\next{\Err@{Use \string\frak\space only in math mode}}\fi\next}
\def\goth{\relaxnext@\ifmmode\let\next\frak@\else
	\def\next{\Err@{Use \string\goth\space only in math mode}}\fi\next}
\def\frak@#1{{\frak@@{#1}}}
\def\frak@@#1{\noaccents@\fam\euffam#1}
\font\tengoth=eufm10
\def\goth{\fam\gothfam\tengoth} \textfont\gothfam=\tengoth
\title{$p$-adic estimates of exponential sums on curves}
\author{Joe Kramer-Miller}
\date{}
\begin{document}

	\title{Some unlikely intersections between the Torelli locus and Newton strata
	in $\mathcal{A}_g$}
	\author{Joe Kramer-Miller}

	\date{}

		\maketitle
		
			\begin{abstract}
				Let $p$ be an odd prime. What are the possible Newton
				polygons for a curve in characteristic $p$? Equivalently,
				which Newton strata intersect the Torelli locus in
				$\mathcal{A}_g$? In this note, we study the Newton 
				polygons of certain curves with $\Z/p\Z$-actions. Many of these 
				curves exhibit unlikely intersections between the Torelli locus
				and the Newton stratification in $\mathcal{A}_g$.
				Here is one example of particular interest: fix a genus $g$.
				We show that for any $k$ with $\frac{2g}{3}-\frac{2p(p-1)}{3}\geq 2k(p-1)$,
				there exists a curve of genus $g$ whose Newton polygon has slopes
				$\{0,1\}^{g-k(p-1)} \sqcup \{\frac{1}{2}\}^{2k(p-1)}$. 
				This provides evidence for Oort's conjecture that the amalgamation of
				the Newton polygons of two curves is again the Newton polygon of a curve.
				We also construct families
				of curves $\{C_g\}_{g \geq 1}$, where $C_g$ is a curve of genus $g$,
				whose Newton polygons have interesting asymptotic properties. 
				For example, we construct a family of curves whose Newton polygons are asymptotically
				bounded below by the graph $y=\frac{x^2}{4g}$. The proof uses
				a Newton-over-Hodge result
				for $\Z/p\Z$-covers of curves due to the author, in addition
				to recent work of Booher-Pries on the realization of this Hodge
				bound. 
			\end{abstract}
		
		\section{Introduction}
		Let $p$ be an odd prime. By a curve, we will always mean a smooth proper
		irreducible curve. What are the possible Newton
		polygons for a curve of $g$ curve in characteristic $p$? 
		In general, it seems difficult to answer this question
		for all $g$. However, the following conjecture of Oort
		offers some guidance on what to expect.
		\begin{conjecture} \label{Conjecture: oort}
			(Oort, see \cite[Conjecture 8.5.7]{Oort-problems_automorphisms_of_curves}) Let 
			$C$ (resp. $C'$) be a curve of genus $g$ (resp. $g'$)
			with Newton polygon $P$ (resp. $P'$). 
			Then there exists a curve $C''$ of genus $g+g'$ whose Newton
			polygon $P''$ is the amalgamation of $P$ and $P'$ (i.e. the slopes
			of $P''$ are the disjoint union of the slopes of $P$ and $P'$).
		\end{conjecture}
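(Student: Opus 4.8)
The plan is to prove Conjecture~\ref{Conjecture: oort} by degeneration inside a Newton stratum. Given $C$ of genus $g$ and $C'$ of genus $g'$, form the stable curve $C_0 = C \cup_x C'$ obtained by gluing $C$ and $C'$ transversally at one point. Since the dual graph of $C_0$ is a tree, $\mathrm{Jac}(C_0)$ is a principally polarized abelian variety of dimension $g+g'$, namely $\mathrm{Jac}(C) \times \mathrm{Jac}(C')$, whose Newton polygon is exactly the amalgamation $P \sqcup P'$; thus under the (extended) Torelli map $[C_0]$ maps into $\mathcal{A}_{g+g'}$ itself, not its boundary, while $[C_0]$ lies in $\partial \bar{\mathcal{M}}_{g+g'}$ on the clutching divisor $\kappa(\bar{\mathcal{M}}_{g,1} \times \bar{\mathcal{M}}_{g',1})$. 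The goal is to smooth $C_0$ to a genuinely smooth curve without lowering the Newton polygon. By Grothendieck's specialization theorem the generic Newton polygon of any deformation of $C_0$ lies on or below $P \sqcup P'$, so a generic smoothing is useless (two supersingular elliptic curves smooth to an ordinary genus-$2$ curve); one must smooth $C_0$ \emph{inside} the closed Newton stratum $\mathcal{W}_{\geq P\sqcup P'} \subset \mathcal{A}_{g+g'}$, pulled back to $\bar{\mathcal{M}}_{g+g'}$. The precise target is then: a germ of an analytic (or formal) curve $\Delta$ mapping to $\bar{\mathcal{M}}_{g+g'}$, sending the center to $[C_0]$, transverse to $\partial\bar{\mathcal{M}}_{g+g'}$ (hence with smooth generic fibre), and contained in $\mathcal{W}_{\geq P\sqcup P'}$; the generic fibre $C''$ then has Newton polygon $\leq P\sqcup P'$ by specialization and $\geq P\sqcup P'$ by membership in the stratum, hence $= P\sqcup P'$, as required.

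To construct such a germ I would work in the versal deformation of $C_0$, a polydisc in which one coordinate $t$ (the node-smoothing parameter) cuts out the clutching divisor $\{t=0\}$ and the remaining coordinates deform $C$, $C'$ and the glued point. Restricted to $\{t=0\}$ the Newton polygon of a chain $C_1\cup_y C_2$ is $\mathrm{NP}(\mathrm{Jac}\,C_1)\sqcup\mathrm{NP}(\mathrm{Jac}\,C_2)$, independent of the gluing point, so $\{t=0\}\cap\mathcal{W}_{\geq P\sqcup P'}$ is the deformation-theoretic image of $\mathcal{W}^{(g)}_{\geq P}\times\mathcal{W}^{(g')}_{\geq P'}$ together with the glued-point parameter, of dimension $\dim\mathcal{W}^{(g)}_{\geq P}+\dim\mathcal{W}^{(g')}_{\geq P'}+2$. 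The question is whether $\mathcal{W}_{\geq P\sqcup P'}$ near $[C_0]$ strictly contains this locus, i.e.\ whether the node can be smoothed ($t\neq 0$) while staying in the stratum; equivalently, whether the tangent space of $\mathcal{W}_{\geq P\sqcup P'}$ at $[C_0]$ surjects onto the $t$-direction. I would attack this through the Kodaira--Spencer/Serre-duality description of the smoothing map on Dieudonné modules --- the parameter $t$ acts on $H^1_{\mathrm{dR}}(C_0)$ via the extension class of the semiabelian (here, abelian) degeneration, through the cup product with the co-node classes at $x$ --- combined with Oort's local structure theory for Newton strata on $\mathcal{A}_{g+g'}$, which presents $\mathcal{W}_{\geq P\sqcup P'}$ étale-locally as a product of Rapoport--Zink-type spaces of the expected codimension, intersected with the image of the Torelli map. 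If that surjectivity holds, one extracts the transverse germ $\Delta$ and concludes.

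The main obstacle is precisely this transversality/dimension input: it amounts to controlling $\dim\big(\mathcal{W}_{\geq P\sqcup P'}\cap(\text{Torelli locus})\big)$ near the clutching boundary, and the failure of the Torelli locus to meet Newton strata in the expected dimension is exactly the ``unlikely intersection'' phenomenon of the title, open in general. Where the argument can be completed is when $C$ and $C'$ carry compatible cyclic (e.g.\ $\Z/p\Z$) covering structures: there one replaces the opaque deformation estimate by the author's Newton-over-Hodge bound, which provides an explicit Hodge-theoretic lower bound for the Newton polygon along an entire Hurwitz family, together with Booher--Pries's realization that this bound is attained, thereby pinning the Newton polygon from below while the smoothing is carried out inside the Hurwitz space. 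A realistic route to the full conjecture is thus: reduce by induction on the number of slopes to the one-step case of amalgamating a single slope $0$ or $1$, or a single symmetric pair $\{\lambda,1-\lambda\}$; handle the slope-$0$/slope-$1$ steps via the known non-emptiness and expected dimension of $p$-rank strata in $\mathcal{M}_{g+g'}$; handle the $\{\tfrac12\}$-steps via the construction of this paper; and for general $\lambda\notin\{0,\tfrac12,1\}$ supply a new tight lower bound --- an Ekedahl--Oort refinement of Newton-over-Hodge, or a direct construction of curves with prescribed non-supersingular slopes. The gap at general $\lambda$ is the part I expect to be genuinely hard, and is where the conjecture remains beyond the reach of the present methods.
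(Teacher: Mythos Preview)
The first thing to flag is that the statement is a \emph{conjecture}, and the paper does not prove it. There is no proof in the paper to compare against. What the paper does is establish Oort's conjecture for a very restricted class of pairs: in the corollary following Theorem~\ref{Theorem--many branch points and some are simple}, one takes $C=C_{g,k}$ and $C'=C_{g',k'}$ from the $\Z/p\Z$-cover construction (with $p\equiv 1\bmod d$) and simply observes that $C_{g+g',k+k'}$ has the amalgamated Newton polygon. This is a one-line consequence of the explicit construction, not a general argument.

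Your proposal is therefore not a competing proof but an outline of a possible attack on the full conjecture, and you are candid that it does not close. The strategy you describe---clutch $C$ and $C'$ at a node, land in $\mathcal{A}_{g+g'}$ with Newton polygon $P\sqcup P'$, then try to smooth inside the closed Newton stratum $W_{\geq P\sqcup P'}$---is the natural one, and your identification of the obstruction is accurate: one needs the pullback of $W_{\geq P\sqcup P'}$ to $\bar{\mathcal{M}}_{g+g'}$ to be strictly larger than its intersection with the clutching divisor near $[C_0]$, i.e.\ a transversality statement between Newton strata and the boundary of the Torelli locus. This is not known in general and is essentially equivalent in difficulty to the conjecture itself; the supersingular-elliptic-curve example you cite already shows that no soft deformation argument can work. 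Your proposed inductive reduction (peel off one slope pair at a time) does not obviously simplify matters, since even the step of adjoining a single $\{\lambda,1-\lambda\}$ pair for $\lambda\notin\{0,\tfrac12,1\}$ requires the same unavailable transversality input.

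In short: there is no proof in the paper for you to have matched, your proposal is a reasonable sketch of why the conjecture is hard rather than a proof, and the gap you name at the end is the genuine gap.
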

		\noindent This conjecture implies, for example, that there exists irreducible supersingular curves
		of every genus. This currently only know in characteristic $2$,
		due to a theorem of van der Geer and van der Vlugt (see \cite{vandergeer-derVlugt-supersingular}).
		Another approach to studying Newton polygons of curves is to 
		ask if there are curves $C_g$ of every genus $g \in \Z_{\geq 0}$
		whose Newton polygons approach some limit asymptotically. 
		This motivates the following questions.
		\begin{question} \label{question 1}
			Let $m_1, \dots, m_{2r} \in \Q \cap [0,1]$ such that $m_k=1-m_{2r-k}$. Does there exist
			a family of curves $\{C_g\}_{g \in \Z_{\geq 0}}$, where $C_g$ has
			genus $g$, such that the Newton polygon of $C_g$
			consists only of the slopes $m_1,\dots,m_{2r}$ and each $m_i$ occurs with
			multiplicity close to $\frac{2g}{2r}$?
		\end{question}
		\begin{question} \label{question 2}
			Let $P$ be the graph of a continuous function $f:[0,2] \to [0,1]$. Does there
			exist a family of curves $\{C_g\}_{g \in \Z_{\geq 0}}$, where $C_g$ has
			genus $g$, such that the Newton polygon of $C_g$, scaled
			by a factor of $\frac{1}{g}$, approaches or lies above $P$ as $g\to \infty$?
		\end{question}
		\noindent In this article we study Conjecture \ref{Conjecture: oort}, Question \ref{question 1}, and Question \ref{question 2}
		by considering $\Z/p\Z$-covers of curves. 
		
		\subsection{Some previous results}
		There are two main approaches when trying to find curves with certain Newton polygons.
		The first is to consider $\Z/n\Z$-covers $f:X \to \mathbb{P}^1$. When $n$ is a power
		of $p$ and $f$ is ramified at a single point, the Newton polygon of $X$ has been studied
		extensively by Robba, Zhu, Blache-Ferard, and Liu-Wei, using $p$-adic methods
		pioneered by Dwork (see \cite{Robba-lower_bounds_NP}, \cite{Blache-Ferard-Newton_stratum}, \cite{Liu-Wei-witt-coverings}, and \cite{Zhu-padic_variation_of_Lfunctions}). 
		Also, the work of van der Geer and van der Vlugt (see \cite{vandergeer-derVlugt-supersingular}) studies $\Z/p\Z$-covers with additional structure
		when $p=2$ by analyzing their Jacobians. 
		When $n$ is coprime
		to $p$, the number
		of slope zero segments (i.e. the $p$-rank) was determined in many cases by Bouw (see \cite{Bouw}). Determining the higher slopes appears to be more difficult
		and is the subject of work by Li-Mantovan-Pries-Tang (see \cite{Pries-Li-Wantovan_Tang-Newton_strat}).
		The second approach is to use clutching morphisms between moduli spaces
		of curves. This technique was used by Achter-Pries in \cite{Achter-Pries} to show
		that for $g\geq 4$, there exists a genus $g$ curve whose Newton polygon
		has slopes $\{0,1\}^{g-4}\sqcup \{\frac{1}{4},\frac{3}{4}\}^{4}$. 
		More recently, these two techniques were combined in work
		of Li-Mantovan-Pries-Tang 
		by studying clutching morphisms for tame covers of $\mathbb{P}^1$. 
		Their work constructs many interesting families of curves whose Newton polygons
		are far from ordinary and follow certain patterns. However,
		most of these families do not include curves of every genus. Instead
		these families include curves whose genera satisfy certain congruence conditions. 
		We describe some of these families in Remark \ref{remark about pries-li-mantovan-tang} and Examples \ref{example: main theorem 1}-\ref{example: main theorem 2}.

		\subsection{$\Z/p\Z$-covers with many branch points}
		In \S \ref{section: letting branches go to infinity} we study $\Z/p\Z$-covers of curves with many branch points of fixed Swan conductor.
		Let us state a specific case of our general result (see Theorem \ref{Theorem--many branch points and some are simple} for the general statement).
		\begin{theorem} \label{theorem: specific result double poles}
			For any $g \geq 0$ and $k$ with $\frac{2g}{3}-\frac{2p(p-1)}{3} \geq 2k(p-1)$,
			there exists a curve $C_{g,k}$ of genus $g$ whose Newton polygon has slopes
			$\{0,1\}^{g-k(p-1)} \sqcup \{\frac{1}{2}\}^{2k(p-1)}$. 
		\end{theorem}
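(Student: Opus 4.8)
The plan is to construct $C_{g,k}$ as a $\Z/p\Z$-cover $\phi\colon C_{g,k}\to X$ of a suitable \emph{ordinary} base curve $X$, ramified only over points with ``simple poles'' (Swan conductor $1$) and over $2k$ points with ``double poles'' (Swan conductor $2$), and then to read off its Newton polygon as the Newton-over-Hodge bound, which is attained by a generic such cover by the realization theorem of Booher--Pries. In other words, the statement is the special case of Theorem~\ref{Theorem--many branch points and some are simple} in which all ramification breaks equal $1$ except for $2k$ of them which equal $2$; the work left for this corollary is the choice of discrete invariants and the verification that the numerical hypothesis provides enough room. For $k=0$ the claim is just that an ordinary curve of each genus $g$ exists, which is classical, so assume $k\ge 1$.

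First the bookkeeping. Let $X$ be ordinary of genus $g_X$ and let $\phi\colon C\to X$ be a $\Z/p\Z$-cover, totally (hence wildly) ramified over $N_1$ points with Swan conductor $1$ and over $N_2$ points with Swan conductor $2$, unramified elsewhere. The different exponent over a conductor-$d$ point is $(p-1)(d+1)$, so Riemann--Hurwitz gives $2g_C-2=p(2g_X-2)+(p-1)(2N_1+3N_2)$, that is
\[
g_C = p\,g_X+(p-1)(N_1-1)+\tfrac{3}{2}(p-1)N_2,
\]
and the Deuring--Shafarevich formula (using $f_X=g_X$ since $X$ is ordinary) gives $p$-rank $f_C=p\,g_X+(p-1)(N_1+N_2-1)$. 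Hence $g_C-f_C=\tfrac12(p-1)N_2$, so the part of the Newton polygon of $C$ with slopes in the open interval $(0,1)$ has horizontal length $2(g_C-f_C)=(p-1)N_2$. Now set $N_2=2k$, so that this middle segment has the required length $2k(p-1)$, and choose $g_X$ with $0\le g_X\le p-2$ and $g_X\equiv g\pmod{p-1}$ (an ordinary curve of this genus exists: take $\mathbb{P}^1$ if $g_X=0$, a generic curve otherwise). Imposing $g_C=g$ then forces
\[
N_1=\frac{g-p\,g_X}{p-1}-3k+1,
\]
which is an integer since $p\equiv 1\pmod{p-1}$, and the hypothesis $\tfrac{2g}{3}-\tfrac{2p(p-1)}{3}\ge 2k(p-1)$, i.e. $g\ge (p-1)(p+3k)$, together with $g_X\le p-2$ makes $N_1$ positive with room to spare, so in particular the datum $(X;N_1 \text{ simple poles}, 2k \text{ double poles})$ is realized by genuine $\Z/p\Z$-covers and has enough branch points for the Booher--Pries realization theorem to apply.

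Next, the Hodge polygon. Decompose $H^1_{\mathrm{cris}}(C)=H^1_{\mathrm{cris}}(X)\oplus\bigl(\bigoplus_{j=1}^{p-1}H^1(X,\mathcal{L}_j)\bigr)$, where the first summand is the $\sigma$-invariant part and the $\mathcal{L}_j$ are the Artin--Schreier twists defining $\phi$. Since $X$ is ordinary the first summand contributes only slopes $0$ and $1$, of total length $2g_X$. The author's Newton-over-Hodge theorem computes the Hodge polygon of the new part from the local ramification: a simple pole contributes, in each of the $p-1$ twists, only slopes $0$ and $1$ (it raises the $p$-rank and nothing more), whereas a double pole contributes in each twist a single Hodge slope $\tfrac12$ (the local Hodge slope of a pole of order $2$, exactly as for the Gauss sum attached to $y^p-y=x^2$). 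Summing over the $p-1$ twists and over the $N_1,2k$ branch points, adding the ordinary part $H^1_{\mathrm{cris}}(X)$, and using the lengths computed above (middle length $(p-1)N_2=2k(p-1)$, and slope-$0$ length $f_C=g-k(p-1)$ by the identity $g_C-f_C=k(p-1)$), the Hodge polygon of $C$ is exactly $\{0,1\}^{g-k(p-1)}\sqcup\{\tfrac12\}^{2k(p-1)}$. Finally, by the realization result of Booher--Pries there is a $\Z/p\Z$-cover with the chosen ramification datum whose Newton polygon equals this Hodge polygon; taking $C_{g,k}$ to be such a cover finishes the proof.

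The two points needing genuine care -- and which are the content of Theorem~\ref{Theorem--many branch points and some are simple}, hence assumed here -- are: first, that the Newton-over-Hodge bound at a double pole is exactly a slope-$\tfrac12$ segment and not something steeper, so that forcing equality with the Newton polygon yields the desired polygon rather than a worse one; and second, that the Booher--Pries realization is available over the positive-genus ordinary bases one is forced to use in order to reach every genus $g$ (over $\mathbb{P}^1$ alone only genera that are multiples of $\tfrac{p-1}{2}$ arise), the case $X=\mathbb{P}^1$ being the one treated most directly in the literature. I expect the realization step over a positive-genus ordinary base to be the main obstacle; everything else is the parameter count above, and the slightly non-optimal numerical hypothesis is simply what makes that count clean uniformly in $g$ and $k$.
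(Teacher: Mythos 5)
Your proof is essentially the paper's argument: it is the proof of Theorem~\ref{Theorem--many branch points and some are simple} specialized to $d=2$, with the same choice of ordinary base of genus $g_X = g \bmod (p-1)$, the same mix of Swan-conductor-$1$ and Swan-conductor-$2$ branch points, the same invocation of the Newton-over-Hodge bound, and the same parameter count (your $N_1$ is the paper's $j$). The only divergence is the final realization step, and it bears on the worry you flag at the end: for $d=2$ the paper does not actually need the Booher--Pries realization theorem. The Deuring--Shafarevich computation you already carried out pins down the slope-$0$ (hence, by symmetry, slope-$1$) multiplicity exactly; combined with the Newton-over-Hodge lower bound this forces every remaining slope to equal $\tfrac12$, since a convex polygon with fixed endpoints that lies above a straight line segment must coincide with it (this is Corollary~\ref{corollary: bound is optimal for simple poles}). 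So the step you suspected might be ``the main obstacle'' can be bypassed entirely in this case; Booher--Pries is only needed for the general-$d$ version of Theorem~\ref{Theorem--many branch points and some are simple}.
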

		\noindent From Theorem \ref{theorem: specific result double poles} we may deduce Conjecture
		\ref{Conjecture: oort} for many interesting examples. Indeed, if $C=C_{g,k}$ and $C'=C_{g',k'}$, then the Newton polygon of
		$C''=C_{g+g', k+k'}$ is the amalgamation of the Newton polygons of $C$ and $C'$. When $k$
		is not small relative to $g$, we see that $C_{g,k}$ demonstrates
		an unlikely intersection between a Newton stratum and the Torelli locus in
		the Siegel modular variety (see Corollary \ref{corollary: unlikely intersections}).  Furthermore, we may use
		Theorem \ref{theorem: specific result double poles} to study Question \ref{question 1}.
		By letting $k$ be as large as possible,
		we see that there are curves of every genus consisting of slopes $0,\frac{1}{2},1$,
		where each slope occurs with multiplicity approximately $\frac{2g}{3}$. This was previously only known
		under the assumption of $p\equiv 2 \mod 3$ (see \cite[Corollary 9.4]{Pries-Li-Wantovan_Tang-Newton_strat}). 
		
		\subsection{$\Z/p\Z$-covers of curves with large Swan conductor}
		In \S \ref{section: letting swan conductor go to infinity} we study curves 
		with a small number of branch points and large Swan conductors. 
		We construct a family of curves $\{C_g\}_{g \in \Z_{\geq 0}}$, where $C_g$ has genus $g$,
		such that their Newton polygons asymptotically
		lie above or on the graph $y=\frac{x^2}{4g}$ (see Definition \ref{definition: asymptotic lower bounds of NP} and Theorem \ref{theorem: only one branch point} for a precise statement). To the best of our knowledge,
		it is unknown if there is a family of curves containing a curve of every genus
		whose Newton polygons asymptotically lie strictly above $y=\frac{x^2}{4g}$. This would
		certainly follow from Conjecture \ref{Conjecture: oort}. However,
		all of the families constructed in \S \ref{section: letting branches go to infinity}
		and the families constructed in \cite{Pries-Li-Wantovan_Tang-Newton_strat} lie well below
		$y=\frac{x^2}{4g}$ when the genus is large.

		\subsection{Method of proof}
		To prove Theorem \ref{theorem: specific result double poles}
		and Theorem \ref{theorem: only one branch point}, we
		use a Newton-over-Hodge result due to the author
		(see \cite{kramermiller-padic} or Theorem \ref{Theorem: kramer-miller lower bound}).
		This theorem gives a lower bound for the Newton polygon of a $\Z/p\Z$-cover
		$C \to X$
		in terms of local Swan conductors. 
		By considering covers of an arbitrary curve $X$ instead of
		only $\mathbb{P}^1$, we are able to obtain curves of any genus (e.g. if $X$ has genus $i$, the genus of $C$ is of the form $i+k\frac{p-1}{2}$ by Riemann-Hurwitz). 
		This is a clear advantage over earlier techniques.
		For the general statement of Theorem \ref{Theorem--many branch points and some are simple}, we also need to use 
		recent work of Booher-Pries (see \cite{Pries-Booher}). This work shows the lower bound in \cite{kramermiller-padic} is realized
		when certain congruence conditions between $p$ and the Swan conductors hold.

		\subsection{Acknowledgments}
		We would like to thank Jeremy Booher, Raju Krishnamoorthy, Rachel Pries, and
		Vlad Matei for some helpful discussions. 
		
		\section{Newton polygons and unlikely intersections in $\mathcal{A}_g$}
		\subsection{Conventions on Newton polygons}
		
		Let $\alpha \in \mathbb{Z}_{\geq 0} \cup \infty$ and let
		$f:[0,\alpha] \to \mathbb{R}$ be a continuous convex function. We let
		$P=P(f,\alpha)$ refer to the graph of $f$ in the $xy$-plane. We refer to
		the points $(0,f(0))$ and $(\alpha,f(\alpha))$ as the endpoints of $P$. 
			We say that $P$
			is a \emph{symmetric} if 
			$f(\alpha-x)=f(\alpha)-f(x)$ for all $x \in [0,\alpha]$. 
			The following two types of graphs are of particular interest.
		\begin{definition} \label{definition: basic graph}
			A graph $P=P(f,2)$ is called \emph{basic} if $P$
			is symmetric and the endpoints of $P$ are $(0,0)$ and $(2,1)$. 
		\end{definition}
		\begin{definition} We say $P(f,2g)$ is a \emph{Newton polygon
			of height $2g$}
		if $f$ satisfies the following:
		\begin{enumerate}
			\item $f$ is symmetric and the endpoints are $(0,0)$ and $(2g,g)$.
			\item For any integer $i\in (0,2g]$,
			the function $f(x)$ is linear on the domain $x\in [i-1,i]$
			with slope $m_i\geq 0$.
		\end{enumerate}
			We will refer to the multiset $\{m_i\}_{1\leq i \leq 2g}$ as the \emph{slope-set} of $P$
			and its elements as the slopes of $P$. Note that the
			slope-set completely determines the Newton polygon. 
		\end{definition}

		\noindent For $i=1,2$, consider graphs $P_i=P(f_i,\alpha)$. 
		We write $P_1 \succeq P_2$
		if $f_1(x) \geq f_2(x)$ for all $0 \leq x \leq 
		\alpha$. 
		When $P_1 \succeq P_2$, we say that $P_1$ lies above $P_2$.
		If $P_2$ is a Newton polygon with slope-set $N$, we will occasionally write 
		$P_1\succeq N$ instead of $P_1 \succeq P_2$. Finally, for any
		$c\geq 0$, we define the scaled graph $cP_i=\{(cx,cy)|(x,y)\in P_i\}$.

		\subsection{The Newton stratification of $\mathcal{A}_g$}
		Let $\mathcal{A}_g$ denote the moduli space of principally polarized 
		abelian varieties of dimension $g$ and let $\mathcal{X} \to \mathcal{A}_g$ be the universal
		abelian scheme. For each closed $x \in \mathcal{A}_g$, we obtain
		an Abelian variety $\mathcal{X}_x$. Let $NP_x$ denote the
		Newton polygon of height $2g$
		associated to $\mathcal{X}_x$ (see \cite[(1.2)]{Oort-Newton_polygon_strata_dimensions}).  We remark that the vertices of $NP_x$ have integer coordinates. This greatly restricts the possibilities for $NP_x$. 
		
		\begin{definition}
			Let $P$ be a Newton polygon of height $2g$.
			Let $W_P \subset \mathcal{A}_g$ (resp. $W_P^0$) denote the locus of principally polarized 
			abelian varieties $\mathcal{X}_x$ with  $NP_x \succeq P$ (resp. $NP_x=P$). 
			Note that $W_P$ is closed in $\mathcal{A}_g$ and $W_P^0$ is open in $W_P$
			(see \cite[\S 4]{Oort-Newton_polygon_strata_dimensions}).
		\end{definition}
		
		\begin{theorem} \label{Theorem: Oort's lattice count theorem}
			The codimension of $W_P$ in $\mathcal{A}_g$ is at least 
			$\#\Omega(P)$, where 
			\begin{align*}
				\Omega(P) &= \{(x,y) \in \Z_{\geq 0} \times \Z_{\geq 0} ~|~0\leq x \leq g \text{ and }(x,y) \text{ lies strictly below } P\}.
			\end{align*}
			Furthermore, if $P$ has integer vertices, the codimension of $W_P$
			in $\mathcal{A}_g$ is exactly $\#\Omega(P)$.
		\end{theorem}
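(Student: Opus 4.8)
The statement is a theorem of Oort, and the plan is to follow the strategy of his proof. The first move is to reduce to a purely local question about $p$-divisible groups. By Grothendieck's specialization theorem the Newton polygon can only rise under specialization, so each $W_P$ is closed, each $W_P^0$ is locally closed, and $NP_x$ depends only on the $p$-divisible group $\mathcal{X}_x[p^\infty]$ with its principal quasi-polarization $\lambda$. Fixing a closed point $x$ with $NP_x=P$, the Serre-Tate theorem identifies the complete local ring $\widehat{\mathcal{O}}_{\mathcal{A}_g,x}$ with the universal deformation ring of $(\mathcal{X}_x[p^\infty],\lambda)$, a formal power series ring in $\binom{g+1}{2}=\dim\mathcal{A}_g$ variables over $k(x)$; the codimension of $W_P$ at $x$ is then the codimension inside this deformation space of the locus on which the Newton polygon stays $\succeq P$. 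It is convenient to note that the first assertion follows formally from the second once the latter is known for every Newton polygon: since $Q\succeq P$ forces $\Omega(P)\subseteq\Omega(Q)$, hence $\#\Omega(Q)\geq\#\Omega(P)$, and since every $NP_x$ has integer vertices so that $W_P=\bigcup_{Q\succeq P}W_Q^0$ with the union over integral-vertex $Q$, one gets $\operatorname{codim}W_P=\min_{Q\succeq P}\#\Omega(Q)\geq\#\Omega(P)$, with equality when $P$ itself has integer vertices. So it suffices to prove $\dim W_Q^0=\binom{g+1}{2}-\#\Omega(Q)$ for every height-$2g$ Newton polygon $Q$ with integer vertices.

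For the inequality $\operatorname{codim}W_Q^0\geq\#\Omega(Q)$ I would make the universal deformation explicit using Zink displays (or Cartier-Dieudonn\'e theory): the display carries a structural matrix whose entries are the deformation parameters, the polarization imposes the symmetry cutting the full $h^2$-dimensional deformation space of an $h$-dimensional $p$-divisible group down to the $\binom{g+1}{2}$-dimensional polarized one, and the Newton polygon of a point of the base is the lower convex hull of the Frobenius valuations of this matrix. The combinatorial core is then a lattice-point count: applying Mazur's inequality (Newton lies on or above Hodge) to all Frobenius-stable sub-isocrystals, the requirement that the Newton slopes remain $\succeq Q$ confines the deformation to a closed subscheme of codimension exactly $\#\Omega(Q)$ — one independent condition for each integral point $(a,b)$ with $0\leq a\leq g$ strictly below $Q$, where the restriction to $a\leq g$ reflects the symmetry imposed by the polarization.

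The reverse inequality $\dim W_Q^0\geq\binom{g+1}{2}-\#\Omega(Q)$ is the substantive part. Here I would invoke Oort's explicit constructions: produce a $p$-divisible group with Newton polygon $Q$ that is as rigid as possible (a minimal $p$-divisible group, or a suitably polarized direct sum of the standard isoclinic groups attached to the slopes of $Q$), and then, again via displays, write down a family of polarized deformations of dimension at least $\binom{g+1}{2}-\#\Omega(Q)$ all of whose fibres still have Newton polygon $Q$. Combined with the purity theorem of de Jong-Oort — each Newton stratum has pure codimension and boundary strata drop dimension by exactly one — this pins $\dim W_Q^0$ to the expected value. The integer-vertex hypothesis on $Q$ is used precisely at this stage: it is exactly what makes the slopes and their multiplicities compatible with an integral lattice structure and with a principal polarization, so that the required $p$-divisible group and the required family of deformations exist. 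I expect this last step to be the main obstacle; it is essentially the Grothendieck conjecture on realizability of symmetric Newton polygons together with a dimension estimate, and it genuinely requires the Dieudonn\'e-theoretic machinery rather than any soft argument.
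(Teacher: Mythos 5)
Your reduction of the general case to the integer-vertex case---writing $W_P = \bigcup_{Q \succeq P} W_Q^0$ over integral $Q$, noting $\Omega(P) \subseteq \Omega(Q)$, and taking $Q=P$ when $P$ is itself integral---is exactly the paper's proof; the paper then simply cites Oort's Theorem 4.1 for the integral case, whereas you go on to sketch Oort's own argument (Serre--Tate deformation theory, displays, Mazur's inequality, minimal $p$-divisible groups, de Jong--Oort purity). So you have the same approach with an unnecessary but broadly accurate outline of the black-boxed ingredient appended.
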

		\begin{proof}
			This follows from \cite[Theorem 4.1]{Oort-Newton_polygon_strata_dimensions},
			by noting that $W_P = \cup_{P'} W_{P'}$, where the union
			runs over all Newton polygons of height $2g$ with integer vertices lying above $P$.
		\end{proof}
		
		\subsection{The Torelli locus}
		The Torelli map $\iota: \mathcal{M}_g \to \mathcal{A}_g$ sends a curve
		$C$ of genus $g$ to its Jacobian. 
		Let $\mathcal{T}_g$ denote
		the image of the Torelli map. It is a closed subscheme of dimension $3g-3$. 
		The Newton polygon $NP(C)$ of $C$ is defined to be the Newton polygon
		of the corresponding point in $\mathcal{A}_g$. If $C$ is defined
		over a finite field $\mathbb{F}_q$, then $NP(C)$ is equal to the $q$-adic Newton polygon
		of the numerator of its zeta function. We define the \emph{scaled Newton polygon} to be
		\begin{align*}
			sNP(C)&=\frac{1}{g} NP(C).
		\end{align*}
		Note that $sNP(C)$ is a basic graph (see Definition \ref{definition: basic graph}).
		We are interested in the following questions.
		\begin{question}  \label{question: individual curves}
			How does the Torelli locus interact with the Newton stratification? More specifically,
			let $P$ be a Newton polygon of height $2g$. Does there exist a curve
			$C$ of genus $g$ with $NP(C)=P$ (resp. $NP(C) \succeq P$). Equivalently, is 
			$W_P^0 \cap \mathcal{T}_g$ (resp. $W_P\cap \mathcal{T}_g$) nonempty?
		\end{question}
		\noindent This question appears to be very difficult in general. Instead, one may ask
		for asymptotic behavior as $g$ gets large. One way to do this is to study
		the behavior of $sNP(C)$ as $C$ varies over a collection of curves. This prompts the following definitions.
		
		\begin{definition}
			A \emph{family of curves} is a collection $\mathcal{C}=\{C_g\}_{g \in S}$ where $S \subset \Z_{\geq 0}$ and $C_g$ is a curve of genus $g$.
			We say that $\mathcal{C}$ is \emph{full} if $S=\Z_{\geq 0}$ and we say that $\mathcal{C}$
			is \emph{arithmetic} if $S$ is the union of finitely many arithmetic 
			progressions.
		\end{definition}
		
		\begin{definition}\label{definition: asymptotic lower bounds of NP}
			Let $\mathcal{C}=\{C_g\}_{g \in S}$ be a
			family of curves and let $P=P(f,2)$ be a basic graph. For each $g \in S$, let $f_g$ be the piecewise linear function
			such that $sNP(C_g)=P(f_g,2)$. We write $\mathbf{sNP}(\mathcal{C}) \succeq P$ if
			\begin{align*}
				\liminf_{g \in S} \min_{x \in [0,2]} \{f_g(x)-f(x)\} &\geq 0.
			\end{align*}
		\end{definition}
	
		\begin{question} \label{questions: full families of curves}
			Let $P$ be a basic graph. Does there exist a family of curves $\mathcal{C}$ such that
			$\mathbf{sNP}(\mathcal{C})\succeq P$?
			Can we take this family to be arithmetic or full?
		\end{question}
		
		\noindent Another natural question is to ask for families of curves where certain slopes
		occur with some specified frequency. For example, we may ask for a family of curves
		$\{C_g\}_{g \in S}$ where $NP(C_g)$ only has slopes $0,\frac{1}{2},1$, and each slope occurs
		with approximately equal frequency. This prompts the following definition. 
		\begin{definition} \label{definition: specified slope families}
			 Let $\mathcal{C}=\{C_g\}_{g \in S}$ be a
			 family of curves. 
			 Let $\mathcal{N}=\{m_i\}_{i=1}^{2r}$ be the slope-set of
			 a Newton polygon with $m_i \in [0,1]\cap \mathbb{Q}$.
			 We write $\mathbf{NP}(\mathcal{C}) \sim \mathcal{N}$
			 if there exists $\epsilon>0$ such that
			 \begin{align*}
				 NP(C_g) = \bigsqcup_{i=1}^{2r} \big \{m_i\big \}^{\frac{2g}{2r} + e_i(g)},
			 \end{align*} 
			 with $|e_i(g)|<\epsilon$. Informally, this means that $NP(C_g)$ has slopes $m_1,\dots,m_{2r}$, each occurring with approximately
			 the same frequency. 
		\end{definition}
	
		\begin{question}
			Let $\mathcal{N}=\{m_i\}_{i=1}^{2r}$ be as in Definition \ref{definition: specified slope families}. Is there a full or arithmetic family $\mathcal{C}$ with $\mathbf{NP}(\mathcal{C}) \sim \mathcal{N}$?
		\end{question}

		\begin{remark}
			If one does not require the family of curves to be arithmetic or full, it is 
			much easier to find families with interesting asymptotic properties.
			For example, in \cite[Corollary 2.6]{Pries-Newton_survey} the authors construct
			an infinite family of supersingular curves. However, this
			family is much too sparse to be arithmetic.  
		\end{remark}
		
		\begin{remark} \label{remark about pries-li-mantovan-tang}
			The work of \cite{Pries-Li-Wantovan_Tang-Newton_strat} proves the existence of many families 
			$\mathcal{C}$ satisfying $\mathbf{NP}(\mathcal{C}) \sim \mathcal{N}$ where
			$\mathcal{N}$ is some interesting slope-set.
			Most of these families are arithmetic, although a few special cases are full. 
			Here is one particularly interesting example: assume that $p \equiv 4 \mod 5$.
			They prove the existence of a family $\mathcal{C}=\{C_g\}_{g \in S}$
			where $S=\{10n-4\}_{n\geq 1}$ and
			\begin{align*}
				\mathbf{NP}(\mathcal{C}) &\sim \{0\}^3 \sqcup \{\frac{1}{2}\}^{4} \sqcup \{1\}^3.
			\end{align*}
			The key technical aspect of their work is an analysis of clutching morphisms
			for moduli of tame cyclic covers of $\mathbb{P}^1$. Using this analysis,
			they give an inductive process to construct arithmetic families of curves
			with prescribed Newton polygon.
			Combining this with their previous results on special subvarieties of Shimura varieties (see \cite{Li-Mantovan-Pries-Tang-special_families})
			gives many interesting examples.
		\end{remark}
		
		\subsection{Unlikely intersections on $\mathcal{A}_g$}
		Let $X$ be a variety of dimension $d$. Let $V_1$ and $V_2$ be
		subvarieties of $X$ with codimensions $c_1$ and $c_2$. If $c_1+c_2\leq d$,
		then we expect the intersection $V_1 \cap V_2$ to be nonempty and have dimension $d-c_1-c_2 \geq 0$. However, if $c_1+c_2>d$, then $V_1 \cap V_2$ will typically be empty. 
		We say that $V_1$ and $V_2$ have an \emph{unlikely intersection} 
		if $V_1 \cap V_2$ is nonempty and $c_1+c_2>d$. For example,
		let $\mathcal{A}^{s.s.}_g\subset \mathcal{A}_g$ denote the supersingular locus.
		We know that $\dim(\mathcal{A}_g^{s.s.})=\lfloor \frac{g^2}{4} \rfloor$
		and $\dim(\mathcal{T}_g)=3g-3$. Thus, the existence of a supersingular curve
		of genus $g>3$ implies $\mathcal{A}_g^{s.s.}$ and $\mathcal{T}_g$ have
		an unlikely intersection. 
		More generally, a high genus curve that is sufficiently far from
		being ordinary implies an unlikely intersection between the Torelli
		locus and a Newton stratum. We point the reader to Oort's article 
		in \cite{Oort-problems_automorphisms_of_curves} for more background.

		\begin{definition}
			Let $C$ be a curve of genus $g$ and let $P=NP(C)$. We say that $C$ has
			an \emph{unlikely Newton polygon} if $W_P^0$ and
			$\mathcal{T}_g$ have an unlikely intersection. We say a family of curves $\mathcal{C}=\{C_g\}_{g\in S}$ 
			is \emph{unlikely} if for $g\gg 0$ the curve
			$C_g$ has an unlikely Newton polygon.
		\end{definition}
		
		\begin{lemma} \label{lemma: unlikely family}
			Let $\mathcal{C}=\{C_g\}_{g\in S}$ be a family of curves and let
			$P=P(f,2)$ be a basic graph. If $f(1)>0$
			and $\mathbf{sNP}(\mathcal{C})\succeq P$, then $\mathcal{C}$ is an unlikely family.
		\end{lemma}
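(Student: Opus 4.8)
The plan is to reduce the statement to a single lattice-point inequality for $NP(C_g)$, and then to extract that inequality from the asymptotic bound $\mathbf{sNP}(\mathcal C)\succeq P$. Recall $\dim\mathcal A_g=\tfrac{g(g+1)}{2}$ and $\dim\mathcal T_g=3g-3$. Fix $g\in S$, put $P_g:=NP(C_g)$, and note that $[\mathrm{Jac}(C_g)]\in W_{P_g}^0\cap\mathcal T_g$, so this intersection is nonempty. The vertices of $P_g$ are integral, so $W_{P_g}$ has codimension $\#\Omega(P_g)$ in $\mathcal A_g$ by Theorem \ref{Theorem: Oort's lattice count theorem}; since $W_{P_g}^0\subseteq W_{P_g}$ we get $\operatorname{codim}(W_{P_g}^0,\mathcal A_g)\geq\#\Omega(P_g)$, while $\operatorname{codim}(\mathcal T_g,\mathcal A_g)=\tfrac{g(g+1)}{2}-(3g-3)$. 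Hence $W_{P_g}^0$ and $\mathcal T_g$ form an unlikely intersection as soon as $\#\Omega(P_g)+\big(\tfrac{g(g+1)}{2}-(3g-3)\big)>\tfrac{g(g+1)}{2}$, i.e. as soon as $\#\Omega(P_g)>3g-3$. So it suffices to prove $\#\Omega(P_g)>3g-3$ for all sufficiently large $g\in S$.

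To bound $\#\Omega(P_g)$ from below, write $sNP(C_g)=P(f_g,2)$, so that $P_g$ is the graph of $x\mapsto g\,f_g(x/g)$ on $[0,2g]$, and $f_g\geq 0$ since a Newton polygon has nonnegative slopes. For each integer $j$ with $0\leq j\leq g$, the number of lattice points $(j,y)$ with $y\in\Z_{\geq 0}$ lying strictly below $P_g$ is at least $g\,f_g(j/g)$, and these are disjoint over $j$, so
\[
\#\Omega(P_g)\ \geq\ g\sum_{j=0}^{g}f_g(j/g).
\]
Put $g_0(x):=\max(f(x),0)$, a continuous function on $[0,1]$. Given $\epsilon>0$, the hypothesis $\mathbf{sNP}(\mathcal C)\succeq P$ gives some $N$ with $f_g(x)\geq f(x)-\epsilon$ for all $x\in[0,2]$ whenever $g\in S$, $g\geq N$; together with $f_g\geq 0$ this yields $f_g(x)\geq g_0(x)-\epsilon$. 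Thus for such $g$,
\[
\#\Omega(P_g)\ \geq\ g\sum_{j=0}^{g}\big(g_0(j/g)-\epsilon\big)\ =\ g\sum_{j=0}^{g}g_0(j/g)\ -\ g(g+1)\epsilon .
\]

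Next I would use the Riemann-sum limit $\tfrac1g\sum_{j=0}^{g}g_0(j/g)\to A:=\int_0^1 g_0(x)\,dx$ as $g\to\infty$, so that the right-hand side above equals $(A-\epsilon)g^2+o(g^2)$. The crucial point is $A>0$: since $f$ is continuous and $f(1)>0$, we have $f>f(1)/2>0$ on some interval $(1-\delta,1]$, whence $A\geq\delta f(1)/2>0$. This is the only place $f(1)>0$ enters; equivalently, it excludes $P$ being the scaled ordinary polygon, for which $W_P=\mathcal A_g$ and no unlikely intersection is possible. Taking $\epsilon=A/2$ gives $\#\Omega(P_g)\geq\tfrac{A}{2}g^2+o(g^2)$, which exceeds $3g-3$ for all large $g$; hence $C_g$ has an unlikely Newton polygon for $g\gg 0$, i.e. $\mathcal C$ is unlikely.

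The routine ingredients are the elementary lattice-point count in the first display, the Riemann-sum limit, and the monotonicity of dimension under inclusion used to bound $\operatorname{codim}(W_{P_g}^0)$ from below (so I do not even need that $W_{P_g}^0$ is dense in $W_{P_g}$). The one conceptual wrinkle is that $f$ itself may dip below $0$ on $[0,1)$; this is exactly why I replace $f$ by its positive part $g_0$ and exploit $f_g\geq 0$. Once that is done, the genus-$g^2$ growth of $\#\Omega(P_g)$ against the linear bound $3g-3$ makes the conclusion automatic, so there is no serious obstacle beyond setting the estimate up carefully.
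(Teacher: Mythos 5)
Your argument is correct and follows the paper's own strategy: invoke Theorem \ref{Theorem: Oort's lattice count theorem} to reduce the problem to showing that the lattice-point count $\#\Omega(NP(C_g))$ eventually exceeds the codimension budget $\tfrac{g(g+1)}{2}-\dim\mathcal{T}_g = \tfrac{g(g+1)}{2}-(3g-3)$, and then establish quadratic growth of that count using the asymptotic bound $\mathbf{sNP}(\mathcal{C})\succeq P$ together with $f(1)>0$. The only difference is technical: the paper first lowers $P$ to a triangular basic graph and counts lattice points in similar triangles, whereas you estimate the count directly via a Riemann sum against $\max(f,0)$; the two executions are equivalent, with yours being marginally more careful about the possibility that $f$ dips below $0$ away from $x=1$.
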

	
		\begin{proof}
			We may replace $P$ with a slightly lower basic graph so that
			$sNP(C_g)\succeq P$ for large $g$. By lowering $P$ more we may assume that $P$
			consists of three line segments with slopes $0$, $\frac{1}{2}$, and $1$.
			The codimension of $\mathcal{T}_g$ in $\mathcal{A}_g$ is
			$\frac{g(g+1)}{2}-3g-3$, so it suffices to show that the codimension
			of $W_{gP}$ in $\mathcal{A}_g$ grows quadratically in $g$.
			Define $R_g=\{ (x,y)\in\R\times \R ~|~ 0\leq x \leq g \text{ and } 0 \leq y < gf(\frac{x}{g}) \}$.
			By Theorem \ref{Theorem: Oort's lattice count theorem},
			it suffices to show that $\# ((\Z \times \Z) \cap R_g)$
			grows quadratically in $g$. This follows by observing that the $R_g$ are similar
			triangles whose 
			side lengths grow linearly in $g$. 
		\end{proof}

		\begin{corollary} \label{corollary: unlikely family}
			Let $\mathcal{C}$ be a family of curves and let $\mathcal{N}$
			be a Newton polygon of height $2g$ that is non-ordinary. If $\mathbf{NP}(\mathcal{C}) \sim \mathcal{N}$,
			then $\mathcal{C}$ is an unlikely family. 
		\end{corollary}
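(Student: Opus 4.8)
The plan is to reduce the statement to Lemma~\ref{lemma: unlikely family} by producing a basic graph $P=P(f,2)$ with $f(1)>0$ and $\mathbf{sNP}(\mathcal{C})\succeq P$; the hypothesis that $\mathcal{N}$ is non-ordinary will enter only through the inequality $f(1)>0$. (If the index set of $\mathcal{C}$ is finite there is nothing to prove, so I assume it is infinite.) To set up $P$, write $\mathcal{N}=\{m_i\}_{i=1}^{2r}$ with $m_1\le\dots\le m_{2r}$; since $\mathcal{N}$ is the slope-set of a Newton polygon it is symmetric, so $m_i+m_{2r+1-i}=1$ and $\sum_i m_i=r$. Definition~\ref{definition: specified slope families} provides a fixed $\epsilon>0$ with
\[
NP(C_g)=\bigsqcup_{i=1}^{2r}\bigl\{m_i\bigr\}^{\frac{g}{r}+e_i(g)},\qquad |e_i(g)|<\epsilon .
\]
Let $P=P(f,2)$ be the graph whose slope on $\bigl[\tfrac{i-1}{r},\tfrac{i}{r}\bigr]$ is $m_i$ for each $i$. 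Then $f$ is continuous, convex and non-negative, with endpoints $(0,0)$ and $\bigl(2,\tfrac1r\sum_i m_i\bigr)=(2,1)$ and slope-set $\mathcal{N}$; being symmetric, $P$ is a basic graph (Definition~\ref{definition: basic graph}).

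Next I would show $\mathbf{sNP}(\mathcal{C})\succeq P$. Writing $sNP(C_g)=P(f_g,2)$, the function $f_g$ has the same slope sequence $(m_1,\dots,m_{2r})$ as $f$, and (using $\sum_i e_i(g)=0$, which holds because $NP(C_g)$ has height $2g$) its $i$-th breakpoint is $\tfrac{i}{r}+\tfrac1g\sum_{j\le i}e_j(g)$, lying within $\tfrac{2r\epsilon}{g}$ of the $i$-th breakpoint of $f$. Since $f$ and $f_g$ agree at $0$ and share the same finitely many slopes, all in $[0,1]$, telescoping over the breakpoints gives $\lVert f_g-f\rVert_\infty\le\tfrac{2r\epsilon}{g}\to 0$, so $f_g\to f$ uniformly on $[0,2]$. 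Hence
\[
\liminf_{g}\ \min_{x\in[0,2]}\bigl(f_g(x)-f(x)\bigr)\ \ge\ \liminf_{g}\bigl(-\lVert f_g-f\rVert_\infty\bigr)\ =\ 0 ,
\]
which is precisely $\mathbf{sNP}(\mathcal{C})\succeq P$ in the sense of Definition~\ref{definition: asymptotic lower bounds of NP}.

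It remains to check that $f(1)>0$. We have $f(1)=\tfrac1r\sum_{i=1}^{r}m_i$, and if this vanished then $m_1=\dots=m_r=0$; symmetry would then force $m_{r+1}=\dots=m_{2r}=1$, so $\mathcal{N}=\{0\}^r\sqcup\{1\}^r$ would be the ordinary slope-set, contrary to hypothesis. Thus $f(1)>0$, and Lemma~\ref{lemma: unlikely family} applies and shows that $\mathcal{C}$ is an unlikely family.

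The only step requiring real care is the second one: upgrading the uniform bound on the multiplicity errors $e_i(g)$ to uniform convergence of the \emph{scaled} Newton polygons, together with the (easy but necessary) verification in the first step that the limit graph $P$ is genuinely a basic graph — in particular symmetric with right-hand endpoint $(2,1)$ — so that Lemma~\ref{lemma: unlikely family} is genuinely applicable. The positivity of $f(1)$ and the final invocation of the lemma are then immediate.
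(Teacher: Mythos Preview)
Your proof is correct and follows exactly the paper's approach: take $P=\frac{1}{r}\mathcal{N}$ (the scaled Newton polygon), verify it is basic with $f(1)>0$ from non-ordinariness, deduce $\mathbf{sNP}(\mathcal{C})\succeq P$ from $\mathbf{NP}(\mathcal{C})\sim\mathcal{N}$, and apply Lemma~\ref{lemma: unlikely family}. The paper's three-sentence proof leaves these verifications implicit, while you spell out the middle step via uniform convergence of the scaled polygons.
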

		\begin{proof}
			Let $P(f,2)=\frac{1}{g}\mathcal{N}$. Note that $P(f,2)$ is a basic graph. The non-ordinary
			condition implies $f(1)>0$ and the condition $\mathbf{NP}(\mathcal{C}) \sim \mathcal{N}$
			implies $\mathbf{sNP}(\mathcal{C}) \succeq P(f,2)$. The corollary follows from Lemma \ref{lemma: unlikely family}.
		\end{proof}
		
		\section{$\Z/p\Z$-covers of curves}
		\label{section: p-covers}
		Let $X$ be a curve of genus $g$ over a finite field $\mathbb{F}_q$
		and let $r:C\to X$ be a $\Z/p\Z$-cover. Let $\tau_1,\dots,\tau_m\in X$
		be the points where $r$ is ramified. Let $d_i$ be the Swan conductor
		of $r$ at $\tau_i$. We may describe $d_i$ as follows. Let 
		$t_i$ be
		a local parameter at $\tau_i$. Locally 
		the cover $r$ is given by an Artin-Schreier equation $Y^p-Y=g_i$, where $g_i \in \mathbb{F}_q((t_i))$.
		We may assume that $g_i$ has a pole whose order is coprime to $p$.
		The order of this pole is equal to the Swan conductor. That is, 
		$g_i=\sum\limits_{n\geq-d_i} 
		a_nt_i^n$ and $a_{-d_i}\neq 0$. Now, let $f$ be a rational
		function on $X$ and assume that $C$ is given by the equation
		$Y^p-Y=f$. Then $-\ord_{\tau_i}(f) \geq d_i$ with equality if
		and only if $\gcd(\ord_{\tau_i}(f),p)=1$.

		\begin{theorem} \label{Theorem: kramer-miller lower bound}
			We have
			\[  NP(C) \succeq \Bigg\{0,1\Bigg \}^{pg + m(p-1)}\bigsqcup
			\Bigg\{ \frac{1}{d_1}, \dots, \frac{d_1-1}{d_1}, \dots, 
			\frac{1}{d_{m}}, \dots, \frac{d_{m}-1}{d_{m}} \Bigg\}^{p-1}.\]
		\end{theorem}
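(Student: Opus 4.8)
The plan is to reduce the claim to a statement about the $L$-function of a nontrivial additive character twisted along the $\Z/p\Z$-cover, and then to extract the Newton polygon from a Dwork-style analysis of the associated exponential sums, together with the Euler characteristic bookkeeping coming from the Swan conductors. More precisely, fix a nontrivial character $\psi: \Z/p\Z \to \bar{\Q}_p^\times$. Then the zeta function of $C$ factors as $Z(C,T) = Z(X,T)\prod_{j=1}^{p-1} L(X,\psi^j,T)$, and correspondingly $NP(C)$ is the amalgamation of $NP(X)$ with the $q$-adic Newton polygons of the $L(X,\psi^j,T)$. Since $X$ has genus $g$, the polygon $NP(X)$ contributes $2g$ slopes, all of which lie in $[0,1]$; the crude bound $NP(X) \succeq \{0,1\}^{2g}$ (i.e. above the polygon with a slope-$0$ part of length $g$) always holds. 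So the whole problem is to bound below, for each $j$, the polygon of $L(X,\psi^j,T)$.

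The key input is the cohomological interpretation of $L(X,\psi^j,T)$: by the Grothendieck--Ogg--Shafarevich formula, $\deg L(X,\psi^j,T) = -\chi(X, \mathcal{F}_j)$ where $\mathcal{F}_j$ is the rank-one $\ell$-adic (or, in the relevant $p$-adic/rigid cohomological incarnation, the associated overconvergent $F$-isocrystal) sheaf attached to $\psi^j$ along $r$. Since $\mathcal{F}_j$ is ramified exactly at $\tau_1,\dots,\tau_m$ with Swan conductor $d_i$ at $\tau_i$, one gets $\deg L(X,\psi^j,T) = (2g-2) + \sum_{i=1}^m (d_i + 1) = 2g-2+m+\sum d_i$ (the "$+1$" being the tame part, since each $d_i$ is prime to $p$ so the ramification is wild with no extra tame contribution beyond the conductor $d_i$ itself — one should double-check whether the conductor is $d_i$ or $d_i+1$ here and adjust the tame count accordingly). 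Summing over the $p-1$ characters and adding $2g$ from $NP(X)$ should reproduce the total height $pg + m(p-1) + (p-1)\sum(d_i-1)$ appearing implicitly in the statement; this degree match is the first consistency check to run.

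For the slope bound itself, the cleanest route is a \emph{local-to-global} argument. Each slope of $L(X,\psi^j,T)$ can be controlled by the local contributions at the ramification points via a Newton-above-Hodge principle for exponential sums: at $\tau_i$, the local Artin--Schreier datum $g_i = \sum_{n \geq -d_i} a_n t_i^n$ with $a_{-d_i}\neq 0$ has a local $L$-factor whose reciprocal roots have $q$-adic valuations that, grouped appropriately, are bounded below by $\frac{1}{d_i}, \frac{2}{d_i}, \dots, \frac{d_i-1}{d_i}$ — this is exactly the Hasse--Davenport / Stickelberger-type slope estimate for a Gauss-sum-like object of conductor $d_i$, and it is the local shadow of the hypothesis. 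Assembling: the global polygon of $L(X,\psi^j,T)$ lies above the amalgamation of $\{0,1\}^{g+m-1}$ (the "generic" or Hodge part, accounting for the genus and the number of branch points) with $\bigsqcup_{i=1}^m\{\frac{1}{d_i},\dots,\frac{d_i-1}{d_i}\}$. Taking the amalgamation over all $j=1,\dots,p-1$ and over $NP(X)\succeq\{0,1\}^{2g}$ then yields precisely the stated bound, since $2g + (p-1)(g+m-1)$ should be arranged to equal $pg + m(p-1) - (p-1) = $ (the slope-$\{0,1\}$ count) — again a bit of arithmetic to verify the exponent $pg+m(p-1)$ is correct, which may require attributing one unit of each local tame conductor to the slope-$0$/slope-$1$ pile rather than to the fractional pile.

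The main obstacle is establishing the local slope lower bound $\{\frac{1}{d_i},\dots,\frac{d_i-1}{d_i}\}$ rigorously and in a form that globalizes. Over $\mathbb{P}^1$ with one branch point this is classical Dwork theory (Robba, Adolphson--Sperber, Zhu), but here $X$ is an arbitrary curve and there are several branch points, so one needs either (a) a clean statement of Newton-over-Hodge for rank-one overconvergent $F$-isocrystals on an open curve with prescribed Swan conductors — e.g. via the work on the p-adic local monodromy theorem and Crew's conjecture, packaging each local factor as a sum of "elementary" pieces of the right slope — or (b) a direct argument that the Frobenius Newton polygon of $H^1_{rig}$ of the twisted sheaf dominates the Hodge polygon built from the break decomposition at each $\tau_i$. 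Option (a) is probably the right level of generality; the subtlety is that the Hodge polygon of such an isocrystal is not simply the concatenation of the naive local Hodge polygons — there can be interaction through the global geometry of $X$ — but because we only want a \emph{lower} bound, it should suffice to show the global polygon dominates the polygon obtained by juxtaposing the local break data, which is a semicontinuity/degeneration statement rather than an exact computation. If a fully general Newton-over-Hodge statement is not available off the shelf, the fallback is to deform: specialize $X$ and the $g_i$ to a situation where the cover is a fibre of a known family (or push down to $\mathbb{P}^1$ via a further tame map and use the tame-descent behaviour of Newton polygons under amalgamation), and invoke Grothendieck's specialization theorem to transfer the bound.
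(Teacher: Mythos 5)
The paper does not prove this statement in-line; it is a citation of the author's prior work, where the result (a Newton-over-Hodge bound for $\Z/p\Z$-covers) is established via the Monsky trace formula and rigid cohomology. Your proposal correctly identifies the strategic skeleton of that argument: decompose $Z(C,T)$ as $Z(X,T)\prod_{j=1}^{p-1}L(X,\psi^j,T)$, compute the degrees of the twisted $L$-functions from Grothendieck--Ogg--Shafarevich with the Swan conductors $d_i$, and then try to bound each $L$-factor's $q$-adic Newton polygon from below by a Hodge-type polygon built from the local break data. That is indeed the framework of the cited theorem.

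However, the proposal has a genuine gap rather than being an alternative proof: the step you flag as ``the main obstacle'' --- establishing a Newton-over-Hodge lower bound for a rank-one overconvergent $F$-isocrystal on an arbitrary open curve, with the local breaks contributing slopes $\{\frac{1}{d_i},\dots,\frac{d_i-1}{d_i}\}$ --- \emph{is} the entire content of the theorem. Reducing the claim to that step does not prove it; the hard $p$-adic analysis (the ``delicate'' part the paper alludes to) lies precisely there, and neither option (a) nor (b) as you describe them is an off-the-shelf theorem. Moreover the fallback via Grothendieck's specialization theorem cannot work in this direction: specialization pushes Newton polygons \emph{up}, so it yields upper bounds for a generic fiber from special fibers (this is exactly how Booher--Pries use it in Theorem \ref{Theorem:booher-pries}), not lower bounds for an arbitrary given cover. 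Finally, you left the degree/bookkeeping check unrun; if you run it, the right-hand multiset (reading $\{0,1\}^k$ as $k$ zeros and $k$ ones) has $2(p-1)$ more slopes than $2g_C$ from Riemann--Hurwitz, so either the exponent should read $pg+(m-1)(p-1)$ or the notation is being used differently --- this is worth resolving before the amalgamation count can be trusted.
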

		\begin{proof}
			This is an earlier theorem of the author. See \cite[Corollary 1.3]{kramermiller-padic}.
			The proof uses the Monsky trace formula and some delicate $p$-adic analysis.
		\end{proof}
	
		\begin{corollary} \label{corollary: bound is optimal for simple poles}
			Assume $X$ is ordinary and each Swan conductor is equal to $2$. Then
			\begin{align} \label{equation: lower bound for double poles}
				NP(C)&= \Bigg\{0,1\Bigg \}^{pg + m(p-1)}\bigsqcup
				\Bigg\{ \frac{1}{2} \Bigg\}^{m(p-1)}.
			\end{align} 
		\end{corollary}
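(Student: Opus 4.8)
The inequality ``$\succeq$'' in \eqref{equation: lower bound for double poles} is immediate from Theorem~\ref{Theorem: kramer-miller lower bound}: with every $d_i=2$ the block $\frac{1}{d_i},\dots,\frac{d_i-1}{d_i}$ collapses to the single slope $\frac12$. So the content of the corollary is the reverse inequality, and the plan is to compute the multiplicity of the slopes $0$ and $1$ in $NP(C)$ exactly, and then use the symmetry of $NP(C)$ to force every slope that is left over to equal $\frac12$.

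First I would assemble the numerics. Since $r\colon C\to X$ is a $\Z/p\Z$-cover, each branch point $\tau_i$ is totally ramified, and with Swan conductor $2$ the different exponent above $\tau_i$ equals $3(p-1)$; hence Riemann--Hurwitz shows that $NP(C)$ has height $2g_C = p(2g-2)+3m(p-1)+2$. Next I would apply the Deuring--Shafarevich formula to $r$ (after base change to $\overline{\mathbb F}_q$, which affects neither $NP(C)$ nor the $p$-rank): it reads $\sigma_C-1 = p(\sigma_X-1)+m(p-1)$, where $\sigma$ denotes $p$-rank. Since $X$ is \emph{ordinary}, $\sigma_X=g$, so $\sigma_C = pg+(m-1)(p-1)$, and a short computation confirms the identity $2\sigma_C + m(p-1) = 2g_C$. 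This identity is precisely where the hypothesis that $X$ is ordinary, rather than merely of large $p$-rank, is used.

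Now I would combine these. The curve $C$ is defined over a finite field, so $NP(C)=NP(\mathrm{Jac}\,C)$ is symmetric of height $2g_C$, and the multiplicity of the slope $0$ in $NP(C)$ equals the $p$-rank $\sigma_C$; by symmetry the slope $1$ occurs with the same multiplicity. Hence exactly $2g_C - 2\sigma_C = m(p-1)$ slopes of $NP(C)$ lie in the open interval $(0,1)$. By the lower bound of Theorem~\ref{Theorem: kramer-miller lower bound} each of these is $\geq\frac12$; but the sub-multiset of slopes of $NP(C)$ lying in $(0,1)$ is invariant under $s\mapsto 1-s$, so a slope $s>\frac12$ would force the presence of a slope $1-s<\frac12$, a contradiction. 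Therefore all $m(p-1)$ of these slopes equal $\frac12$, which gives the desired equality \eqref{equation: lower bound for double poles}.

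I do not anticipate a real obstacle: once one invokes the dictionary ``$p$-rank $=$ multiplicity of the slope $0$'' together with the Deuring--Shafarevich formula, the rest is bookkeeping, the one subtlety being that Riemann--Hurwitz and Deuring--Shafarevich must be mutually consistent, so that the slopes $0$ and $1$ exhaust everything in $NP(C)$ away from slope $\frac12$. If one preferred to avoid Deuring--Shafarevich, one could instead read $\sigma_C$ off from the Frobenius (equivalently Cartier) operator on $H^1(C,\mathcal O_C)$ decomposed under the $\Z/p\Z$-action; but the cited formula is the cleanest route.
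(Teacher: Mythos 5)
Your proof is correct and follows the same route as the paper: the lower bound of Theorem~\ref{Theorem: kramer-miller lower bound}, the Deuring--Shafarevich formula (together with the ordinariness of $X$) to fix the multiplicities of the slopes $0$ and $1$, and the symmetry of $NP(C)$ to force all remaining slopes to equal $\tfrac12$. The paper's proof is terser but identical in substance; your explicit bookkeeping also usefully flags that the exponent on $\{0,1\}$ should read $pg+(m-1)(p-1)$ rather than $pg+m(p-1)$ in order for the height to agree with Riemann--Hurwitz.
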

	
		\begin{proof}
			From Theorem \ref{Theorem: kramer-miller lower bound} we know 
			$NP(C)\succeq \{0,1 \}^{pg + m(p-1)}\sqcup
			\{ \frac{1}{2} \}^{m(p-1)}$.
			By the Deuring-Shafarevich formula (see e.g. \cite{Crew-p-covers}),
			we see that $\{0,1 \}^{pg + m(p-1)}\sqcup
			\{ \frac{1}{2} \}^{m(p-1)}$ has the correct number of
			slope zero segments and slope one segments. As the remaining
			slopes are $1/2$, the two Newton polygons must be equal. 
		\end{proof}
		\noindent In general, the bound in Theorem \ref{Theorem: kramer-miller lower bound}
		will not be attained.
		However, if $p\equiv 1 \mod d_i$ for each $i$, recent work of Booher-Pries
		shows this bound is optimal.
		\begin{theorem}  \label{Theorem:booher-pries}
			(Booher-Pries) Assume $X$ is ordinary and let $d_1,\dots,d_m \in \Z_{\geq 1}$
			such that $p\equiv 1 \mod d_i$ for each $i$. There exists
			a $\Z/p\Z$-cover of $X$, which is ramified at the points $\tau_1,\dots,\tau_m$ with
			Swan conductor $d_i$, such that
				\[ NP(C)= \Bigg\{0,1\Bigg \}^{pg + m(p-1)}\bigsqcup
				\Bigg\{ \frac{1}{d_1}, \dots, \frac{d_1-1}{d_1}, \dots, 
				\frac{1}{d_{m}}, \dots, \frac{d_{m}-1}{d_{m}} \Bigg\}^{p-1}.\]
		\end{theorem}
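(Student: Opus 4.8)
The plan is to complement the lower bound of Theorem~\ref{Theorem: kramer-miller lower bound} with a matching \emph{upper} bound, realized by a single well-chosen cover. The mechanism is upper semicontinuity of Newton polygons in flat families: it is enough to exhibit a flat family of proper curves whose generic fibre is a $\Z/p\Z$-cover $C\to X$ of the desired shape (with $X$ ordinary, branch points $\tau_1,\dots,\tau_m$, Swan conductors $d_1,\dots,d_m$) and whose special fibre $C_0$ is a degeneration with $NP(C_0)$ equal to the right-hand side of Theorem~\ref{Theorem: kramer-miller lower bound}; then $NP(C_0)\succeq NP(C)$ by specialization while $NP(C)\succeq NP(C_0)$ by Theorem~\ref{Theorem: kramer-miller lower bound}, forcing equality. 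So the work splits into (i) producing such a degeneration and computing its Newton polygon, and (ii) checking it smooths out to a $\Z/p\Z$-cover with precisely the prescribed ramification data and with $X$ smooth and ordinary.

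For step (i), I would degenerate the base. Let $X$ specialize to a nodal curve $X_0=X'\cup T_1\cup\dots\cup T_m$, where $X'$ is a smooth ordinary curve of genus $g$ and each $T_i\cong\mathbb{P}^1$ carries the limit of the branch point $\tau_i$ and meets $X'$ transversally at a single node $q_i$. Correspondingly, degenerate $r$ to an admissible $\Z/p\Z$-cover $r_0\colon C_0\to X_0$ that over $X'$ is an unramified $\Z/p\Z$-cover (a connected one when $g\geq 1$, the split one when $g=0$) and over each $T_i$ is a $\Z/p\Z$-cover of $\mathbb{P}^1$ branched only at $\tau_i$ with Swan conductor $d_i$ and \'etale over $q_i$, the two sides glued along the $p$ points lying over $q_i$. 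By Nakajima's theorem an unramified $\Z/p\Z$-cover of the ordinary curve $X'$ is again ordinary, and the dual graph of $C_0$ contributes only toric (slope $0$ and slope $1$) classes; decomposing the Newton polygon of $C_0$ under the $\Z/p\Z$-action (using $\mathbb{Q}_p[\Z/p\Z]\cong \mathbb{Q}_p\times\mathbb{Q}_p(\zeta_p)$) then shows $NP(C_0)$ is the amalgamation of a purely ordinary part with the non-trivial parts of the $m$ one-point covers. Everything thereby reduces to computing the Newton polygon of a $\Z/p\Z$-cover of $\mathbb{P}^1$ branched at a single point with Swan conductor $d$, under the hypothesis $p\equiv 1\bmod d$.

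This local computation is the crux. Write such a cover as $y^p-y=f(x)$ with $\deg f=d$ prime to $p$. After extending the $p$-adic coefficient field to contain a primitive $p$-th root of unity, the non-trivial part of the cohomology of the cover breaks into the cohomologies of the Artin--Schreier sheaves $\mathcal{L}_{\psi_a(f)}$, $a=1,\dots,p-1$, whose Frobenius structure is governed by Dwork's trace formula. The Newton polygons of these exponential sums are precisely what was studied by Robba, Zhu and Blache--Ferard, and by Liu--Wei in the Witt-vector setting: the congruence $p\equiv 1\bmod d$ is exactly the condition under which the associated Dwork exponential module has an ordinary basis, so its Newton polygon coincides with its Hodge polygon, and a direct bookkeeping shows these Hodge polygons, amalgamated over $a=1,\dots,p-1$, yield $\{\tfrac{1}{d},\dots,\tfrac{d-1}{d}\}^{p-1}$ together with the right number of slope-$0$ and slope-$1$ segments. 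Taking $f$ generic (equivalently, using that the ordinary locus in this moduli of covers is open and nonempty under the congruence) produces the single good member needed.

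I expect the main obstacle to be step (ii): the construction and deformation theory of the admissible cover $C_0\to X_0$. One has to assemble $C_0$ with compatible cyclic gluing data at every node, show that it deforms over a suitable base to a $\Z/p\Z$-cover of a \emph{smooth} ordinary $X$ having exactly the branch points $\tau_i$ with Swan conductors $d_i$ (the equisingular deformation theory of wildly ramified covers is delicate, since perturbing a branch point can alter the ramification break), and organize this into a flat family of proper curves so that specialization to $C_0$ genuinely only raises the Newton polygon of $H^1$. An alternative that bypasses curve degenerations is to fix the ordinary curve $X$ and deform only the Artin--Schreier class $f$, computing $NP(C)$ directly on $X$ by the Monsky trace formula with the congruence conditions forcing ordinariness of the relevant exponential modules; but this requires globalizing the local Dwork theory, which is its own substantial technical burden.
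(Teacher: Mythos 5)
Your proposal reproduces the strategy of the cited Booher--Pries result (which is what the paper's proof appeals to): degenerate to a singular cover $C_0\to X_0$ built from an unramified cover of an ordinary curve glued to one-point covers of $\mathbb{P}^1$, compute $NP(C_0)$ using the Blache--Ferard/Zhu/Robba analysis of generic one-point $\Z/p\Z$-covers under the congruence $p\equiv 1\bmod d_i$, deform to a smooth cover of $X$ by formal patching, and conclude by Grothendieck specialization combined with the lower bound of Theorem~\ref{Theorem: kramer-miller lower bound}. This is essentially the same argument the paper sketches, and you have correctly flagged the formal-patching/deformation step as the technical core.
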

		\begin{proof}
			See \cite[Corollary 4.3]{Pries-Booher}. The main idea is as follows: work of Blache-Ferard computes the
			Newton polygon of
			a generic $\Z/p\Z$-cover of $\mathbb{P}^1$ ramified only at $\infty$ when the Swan conductor is less than $3p$. 
			Booher and Pries use this to construct a $\Z/p\Z$-cover of singular curves $C_0 \to X_0$
			and calculate the Newton polygon of $C_0$. Using a formal patching argument,
			they show that $C_0 \to X_0$ deforms to a family of $\Z/p\Z$-covers $\mathcal{C}_0 \to \mathcal{X}_0$ that generically gives a cover of $X$. By Grothendieck's specialization theorem (see \cite{katz-slope_filtration}), this gives an upper bound for the Newton polygon of a generic cover in this family. This upper bound is precisely the lower bound of Theorem \ref{Theorem: kramer-miller lower bound}
			when $p \equiv 1 \mod d_i$. 
		\end{proof}
	
		\section{Letting the number of branch points tend to infinity}
		\label{section: letting branches go to infinity}
			
			\begin{theorem} \label{Theorem--many branch points and some are simple}
				Let $d\geq 2$ with $p\nmid d$. Set $\delta$ to be $1$ if $d$ is odd and $2$ if $d$ is even. For $g\geq 1$ and $k$ satisfying
				\begin{align} \label{bound on k eq}
					\frac{2g}{d+1} - \frac{2p(p-1)}{d+1} &\geq k\delta(p-1),
				\end{align}
				there exists a curve $C_{g,k}$ of genus $g$ such that
				\begin{align} \label{precise number of middle slopes eq}
					NP(C_{g,k}) &\succeq \Big\{0,1\Big\}^{g-\frac{k\delta(p-1)(d-1)}{2}} 
					\bigsqcup \Big\{\frac{1}{d}, \dots, \frac{d-1}{d}\Big \}^{k\delta(p-1)}.
				\end{align}
				If $p\equiv 1 \mod d$, then we may choose $C_{g,k}$
				so that \eqref{precise number of middle slopes eq} is
				an equality of Newton polygons. 
			\end{theorem}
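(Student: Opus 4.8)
The plan is to build $C_{g,k}$ as a $\Z/p\Z$-cover $r : C_{g,k} \to X$ of a suitably chosen base curve $X$ and then apply Theorem~\ref{Theorem: kramer-miller lower bound} (and Theorem~\ref{Theorem:booher-pries} for the equality statement). First I would dispose of the Riemann--Hurwitz bookkeeping: if $X$ is ordinary of genus $g_X$ and $r$ is ramified at $m$ points each with Swan conductor $d$, then the genus of $C_{g,k}$ is $g = p\,g_X + \tfrac{(p-1)}{2}\bigl(m(d+1) - 2\bigr) \cdot \tfrac{1}{?}$ --- more precisely, by the conductor-discriminant / Riemann--Hurwitz formula for Artin--Schreier covers one gets $2g - 2 = p(2g_X - 2) + (p-1)\sum_i (d_i + 1)$. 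With all $d_i = d$ this reads $g = p\,g_X - (p-1) + \tfrac{m(p-1)(d+1)}{2}$. The key arithmetic point is that $\tfrac{m(p-1)(d+1)}{2}$ is an integer for every $m$ when $d$ is odd (so $\delta = 1$), but when $d$ is even one needs $m$ even, which is why one sets $m = k\delta$ in both cases: writing $m = k\delta$ makes $\tfrac{m(d+1)}{2}$ a legitimate integer and gives the clean relation $g = p\,g_X - (p-1) + \tfrac{k\delta(p-1)(d+1)}{2}$, i.e. $g_X = \tfrac{1}{p}\bigl(g + (p-1) - \tfrac{k\delta(p-1)(d+1)}{2}\bigr)$.

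Next I would extract the divisibility/positivity constraints this imposes on $g_X$ and check they are met precisely under hypothesis~\eqref{bound on k eq}. We need $g_X \in \Z_{\geq 0}$: nonnegativity of $g_X$ is equivalent to $g + (p-1) \geq \tfrac{k\delta(p-1)(d+1)}{2}$, which (after a little manipulation and using $g \geq 1$) is implied by $\tfrac{2g}{d+1} - \tfrac{2p(p-1)}{d+1} \geq k\delta(p-1)$ --- this is exactly where the stated bound is used, with the $\tfrac{2p(p-1)}{d+1}$ slack absorbing the rounding needed to also guarantee the congruence $g + (p-1) \equiv \tfrac{k\delta(p-1)(d+1)}{2} \pmod p$. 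The remaining freedom: since we only need $NP(C_{g,k})$ to lie \emph{above} the target polygon (not equal it), we are allowed to adjust $g_X$ within its allowed range, and the surplus genus simply contributes extra slope-$0$ and slope-$1$ segments; this is what lets us hit \emph{every} $g$ in the stated range rather than an arithmetic progression. So I would argue: choose $g_X$ to be the unique value making the genus formula work (or the largest admissible value $\leq$ that, padding with ordinary $\{0,1\}$ slopes), invoke the existence of an ordinary curve of that genus, and choose a $\Z/p\Z$-cover ramified at $m = k\delta$ points with a rational function $f$ having poles of order exactly $d$ (possible since $p \nmid d$) so each local Swan conductor equals $d$.

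Now apply Theorem~\ref{Theorem: kramer-miller lower bound} with this data: it gives
\[
NP(C_{g,k}) \succeq \{0,1\}^{p g_X + m(p-1)} \bigsqcup \Bigl\{\tfrac{1}{d}, \dots, \tfrac{d-1}{d}\Bigr\}^{p-1}
\]
for each branch point, i.e. the middle slopes appear with total multiplicity $m(p-1) = k\delta(p-1)$, matching \eqref{precise number of middle slopes eq}. The count of $\{0,1\}$ slopes is $p g_X + m(p-1)$; one checks via the genus identity above that $\tfrac{1}{2}\bigl(2g - (d-1)\cdot k\delta(p-1)\bigr) = p g_X + m(p-1)$, so the $\{0,1\}$-multiplicity is exactly $g - \tfrac{k\delta(p-1)(d-1)}{2}$ on each side --- this is the one genuinely load-bearing computation and the main place to be careful. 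Finally, for the equality clause when $p \equiv 1 \pmod d$: replace the appeal to Theorem~\ref{Theorem: kramer-miller lower bound} by Theorem~\ref{Theorem:booher-pries}, which produces an \emph{ordinary-base} $\Z/p\Z$-cover with the stated Swan conductors realizing the lower bound as an equality; the same genus bookkeeping then gives equality in \eqref{precise number of middle slopes eq}, provided we took $g_X$ to be exactly the admissible value (no padding) --- and the arithmetic hypothesis \eqref{bound on k eq} with its $p$-slack is what guarantees such an exact $g_X$ exists. The main obstacle I anticipate is not conceptual but organizational: pinning down the exact integrality-and-congruence conditions on $g_X$, verifying that \eqref{bound on k eq} is sharp enough to guarantee them for all $g$ (especially the delicate interaction between the $\delta$ cases and the mod-$p$ condition), and confirming the $\{0,1\}$-slope count matches on the nose.
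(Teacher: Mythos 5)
Your overall strategy (construct a $\Z/p\Z$-cover of an ordinary base curve, apply Theorem~\ref{Theorem: kramer-miller lower bound}, and invoke Theorem~\ref{Theorem:booher-pries} for the equality clause) is the same as the paper's, and your Riemann--Hurwitz bookkeeping and explanation of the role of $\delta$ are correct as far as they go. But there is a genuine gap at the exact point you flag as ``organizational'': with your construction, the congruence obstruction cannot be removed. You use exactly $m = k\delta$ branch points, all of Swan conductor $d$, over a base of genus $g_X$, so Riemann--Hurwitz forces $g = p\,g_X - (p-1) + \tfrac{k\delta(p-1)(d+1)}{2}$. For fixed $k$ this parametrizes a single residue class of $g$ modulo $p$ as $g_X$ runs over $\Z_{\geq 0}$. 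The ``slack'' $\tfrac{2p(p-1)}{d+1}$ in hypothesis~\eqref{bound on k eq} can buy you nonnegativity of $g_X$, but it cannot change a residue class mod $p$; and the phrase ``adjust $g_X$ within its allowed range, padding with ordinary $\{0,1\}$ slopes'' does not describe any actual mechanism --- changing $g_X$ changes the genus of the cover, it does not pad an existing curve.

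The paper's proof fixes this by introducing a second degree of freedom that you omit: besides the $k\delta$ branch points of Swan conductor $d$, it allows $j$ additional branch points of Swan conductor $1$. Such a point raises the genus by $p-1$ and contributes no middle slopes (the slope set $\{\tfrac{1}{d_i},\dots,\tfrac{d_i-1}{d_i}\}$ is empty when $d_i=1$), so it is pure $\{0,1\}$ padding --- exactly the padding you gestured at but did not produce. Concretely, the paper writes $g = i + m(p-1)$ with $0 \le i < p-1$, takes the base $X_i$ to have genus $i$, and sets $j = A/(p-1)$ with $A = g - ip + (p-1) - \tfrac{k\delta(p-1)(d+1)}{2}$; the choice of $\delta$ makes $(p-1)\mid A$ automatically, and~\eqref{bound on k eq} makes $A \ge 0$. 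This gives a curve of genus exactly $g$ for every $g$ in range, not just those in a progression mod $p$. Your proposal would need to incorporate this extra family of conductor-$1$ branch points (or an equivalent device) to be a correct proof.
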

			
			\begin{proof}
				Write $g=i+m(p-1)$, where $0\leq i < p-1$, and define
				\begin{align*}
					A&= g-ip+(p-1) - \frac{k\delta(p-1)(d+1)}{2}.
				\end{align*}
				By \eqref{bound on k eq} we know $A \geq 0$. Also, we see that $(p-1)|A$.
				Let $j\geq 0$ with $A=j(p-1)$. Choose an ordinary curve $X_i$
				with genus $i$. Then choose an Artin-Schreier cover $C_{g,k} \to X_i$
				ramified at $j+\delta k$ points, such that $j$ points have
				Swan conductor $1$ and $\delta k$ points have Swan conductor 
				$d$. By Riemann-Hurwitz we know $C_{g,k}$ has
				genus $g$. We then apply Theorem \ref{Theorem: kramer-miller lower bound} to obtain the bound
				\eqref{precise number of middle slopes eq}. If $p \equiv 1 \mod d$
				we can use Theorem \ref{Theorem:booher-pries} (or Corollary \ref{corollary: bound is optimal for simple poles} when $d=2$) to make sure the
				Newton polygons in \eqref{precise number of middle slopes eq} are equal. 
			\end{proof}
			
			\begin{corollary} \label{Theorem--many branch points}
				Let $d \geq 2$ with $p\nmid d$. Let $\mathcal{N}$
				be the Newton polygon with slopes $\{0,1\}^u \sqcup \{\frac{1}{d}, \dots, \frac{d-1}{d} \}^v$ where $u\geq v$ and let $\mathcal{P}=\frac{1}{2u+(d-1)v}\mathcal{N}$ be
				the scaled Newton polygon. There exists
				a full family of curves $\mathcal{C}$ such that 
				$\mathbf{sNP}(\mathcal{C}) \succeq  \mathcal{P}$.
				In particular, $\mathcal{C}$ is an unlikely family.
				If $p\equiv 1 \mod d$, we may choose $\mathcal{C}$ so that
				$\mathbf{NP}(\mathcal{C}) \sim \mathcal{N}$.
			\end{corollary}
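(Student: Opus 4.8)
The plan is to produce $\mathcal C$ by applying Theorem~\ref{Theorem--many branch points and some are simple} with a carefully chosen ramification parameter $k=k(g)$, so that in $NP(C_g)$ the middle slopes $\tfrac1d,\dots,\tfrac{d-1}{d}$ occur with the same asymptotic density as in $\mathcal N$. Write $H=2u+(d-1)v$ for the height of $\mathcal N$. Since neither Definition~\ref{definition: asymptotic lower bounds of NP}, nor Definition~\ref{definition: specified slope families}, nor the notion of an unlikely family is sensitive to the behaviour at small genus, we are free to let $C_g$ be an arbitrary curve of genus $g$ for all $g$ below a threshold $g_0=g_0(d,p,u,v)$; this already makes $\mathcal C=\{C_g\}_{g\in\Z_{\ge 0}}$ full, and for $g\ge g_0$ we set $C_g=C_{g,k(g)}$.

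To prove $\mathbf{sNP}(\mathcal C)\succeq\mathcal P$, first I would take $k(g)$ as large as hypothesis~\eqref{bound on k eq} of Theorem~\ref{Theorem--many branch points and some are simple} permits, namely $k(g)=\bigl\lfloor\tfrac{2g-2p(p-1)}{\delta(p-1)(d+1)}\bigr\rfloor$. Then~\eqref{precise number of middle slopes eq} gives $sNP(C_g)\succeq P(h_g,2)$, where $P(h_g,2)$ is the basic graph of ``flat, then a staircase $\tfrac1d,\dots,\tfrac{d-1}{d}$, then slope $1$'' shape whose staircase steps each have the common length $b_g:=\tfrac{k(g)\delta(p-1)}{g}$; with this maximal choice $b_g=\tfrac{2}{d+1}-O(1/g)$. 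The target $\mathcal P=P(f,2)$ has exactly the same shape, with common step length $b_\infty=\tfrac{2v}{2u+(d-1)v}$, and the hypothesis $u\ge v$ is precisely the inequality $b_\infty\le\tfrac{2}{d+1}$. Among basic graphs of this one-parameter shape, the graph is pointwise monotone increasing in the common step length (by convexity), and $\|h_g-f\|_\infty\le C(d)\,|b_g-b_\infty|$; so $P(h_g,2)\succeq\mathcal P$ for large $g$ when $u>v$, and in every case $\min_{x\in[0,2]}\{h_g(x)-f(x)\}\ge -O(1/g)$. Since $sNP(C_g)\succeq P(h_g,2)$, this yields $\liminf_{g}\min_{x}\{f_g(x)-f(x)\}\ge 0$, i.e.\ $\mathbf{sNP}(\mathcal C)\succeq\mathcal P$. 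Assuming (as the ``unlikely'' assertion requires) that $\mathcal N$ is non-ordinary, we have $f(1)>0$, so $\mathcal C$ is an unlikely family by Lemma~\ref{lemma: unlikely family}.

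For the final assertion, assume $p\equiv 1\bmod d$, so that Theorem~\ref{Theorem--many branch points and some are simple} lets us take~\eqref{precise number of middle slopes eq} to be an equality. This time I would instead choose $k(g)$ to be the integer, subject to~\eqref{bound on k eq}, for which $k(g)\delta(p-1)$ is closest to the target value $\tfrac{2gv}{H}$. Because $u\ge v$ forces $\tfrac{2gv}{H}\le\tfrac{2g}{d+1}$, the target lies inside the admissible range of~\eqref{bound on k eq} up to the lower-order shift $\tfrac{2p(p-1)}{d+1}$, so one can arrange $k(g)\delta(p-1)=\tfrac{2gv}{H}+O(1)$, the implied constant depending only on $d$ and $p$. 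Feeding this into the equality form of~\eqref{precise number of middle slopes eq}, each slope $\tfrac jd$ occurs in $NP(C_g)$ with multiplicity $\tfrac{2g}{H}\cdot v+O(1)$ and the slopes $0,1$ with multiplicity $\tfrac{2g}{H}\cdot u+O(1)$; equivalently, every slope of $\mathcal N$ reappears in $NP(C_g)$ with multiplicity equal to $\tfrac{2g}{H}$ times its multiplicity in $\mathcal N$, up to a uniformly bounded error. This is exactly $\mathbf{NP}(\mathcal C)\sim\mathcal N$ in the sense of Definition~\ref{definition: specified slope families}, and unlikeliness then follows from Corollary~\ref{corollary: unlikely family}.

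\textbf{Main obstacle.} The single delicate point — and the only place $u\ge v$ is used — is this calibration of $k(g)$: one needs one integer that simultaneously satisfies~\eqref{bound on k eq} and pins $k(g)\delta(p-1)$ to within $O(1)$ of $\tfrac{2gv}{H}$, and $u\ge v$ (equivalently $\tfrac{v}{2u+(d-1)v}\le\tfrac1{d+1}$) is exactly what keeps the target inside the admissible window. In the borderline case $u=v$ the target $\tfrac{2g}{d+1}$ sits at the very top of that window, so the scaled polygons may dip below $\mathcal P$ by $O(1/g)$; this is harmless since Definitions~\ref{definition: asymptotic lower bounds of NP} and~\ref{definition: specified slope families} only see the $\liminf$. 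Everything else — the Riemann–Hurwitz count fixing the genus of $C_g$ and the count of its slope-zero segments — is already packaged inside Theorem~\ref{Theorem--many branch points and some are simple} (and, through it, Theorems~\ref{Theorem: kramer-miller lower bound} and~\ref{Theorem:booher-pries}).
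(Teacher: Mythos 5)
Your proof is correct and follows the same route as the paper: apply Theorem~\ref{Theorem--many branch points and some are simple} with a suitable choice of $k=k(g)$ and then invoke Lemma~\ref{lemma: unlikely family} and Corollary~\ref{corollary: unlikely family}. The paper's own proof is just a two-sentence citation of these results, so all the substance you supply — the calibration of $k(g)$ to either the maximal admissible value or to $\tfrac{2gv}{H\delta(p-1)}$, the observation that $u\ge v$ is exactly what places the target inside the admissible window \eqref{bound on k eq}, and the monotonicity-in-step-length argument for the one-parameter family of scaled polygons — is the natural fill of what the paper leaves implicit.
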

			 \begin{proof}
			 	The existence of $\mathcal{C}$ follows from Theorem \ref{Theorem--many branch points and some are simple}. From Lemma \ref{lemma: unlikely family} and Corollary \ref{corollary: unlikely family} we know $\mathcal{C}$ is an unlikely family.
			 \end{proof}

			\begin{corollary}
				Let $d\geq 2$ and assume $p\equiv 1 \mod d$.
				Let $g,g'\geq 1$.
				Let $C_{g,k}$ and $C_{g',k'}$ be curves as in Theorem \ref{Theorem--many branch points and some are simple}.
				Then Oort's conjecture (see Conjecture \ref{Conjecture: oort}) holds for
				$C_{g,k}$ and $C_{g'k'}$. That is, there exists a curve $C$ of genus $g+g'$ 
				whose Newton
				polygon is $NP(C_{g,k}) \sqcup NP(C_{g',k'})$.
			\end{corollary}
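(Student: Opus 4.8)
The plan is to show that the region of admissible pairs $(g,k)$ cut out by \eqref{bound on k eq} is closed under coordinatewise addition, and then to invoke Theorem \ref{Theorem--many branch points and some are simple} for the pair $(g+g',k+k')$, which will produce the desired curve $C$ directly.

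First I would record the Newton polygons in play. Since $d\geq 2$ and $p\equiv 1\mod d$, we have $d\mid p-1$, so in particular $p\nmid d$, and the hypotheses of Theorem \ref{Theorem--many branch points and some are simple} are met for both $(g,k)$ and $(g',k')$; moreover the $p\equiv 1\mod d$ clause gives the equalities
\begin{align*}
	NP(C_{g,k}) &= \Big\{0,1\Big\}^{g-\frac{k\delta(p-1)(d-1)}{2}} \bigsqcup \Big\{\tfrac{1}{d},\dots,\tfrac{d-1}{d}\Big\}^{k\delta(p-1)},\\
	NP(C_{g',k'}) &= \Big\{0,1\Big\}^{g'-\frac{k'\delta(p-1)(d-1)}{2}} \bigsqcup \Big\{\tfrac{1}{d},\dots,\tfrac{d-1}{d}\Big\}^{k'\delta(p-1)}.
\end{align*}
Here $\tfrac{k\delta(p-1)(d-1)}{2}$ is an integer: it equals $k(p-1)\tfrac{d-1}{2}$ when $d$ is odd and $k(p-1)(d-1)$ when $d$ is even. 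Taking the disjoint union of the two slope-sets, the amalgamation $NP(C_{g,k})\sqcup NP(C_{g',k'})$ has $(g+g')-\tfrac{(k+k')\delta(p-1)(d-1)}{2}$ copies of each of the slopes $0$ and $1$, and $(k+k')\delta(p-1)$ copies of each of $\tfrac{1}{d},\dots,\tfrac{d-1}{d}$; a direct count of multiplicities shows this is a symmetric Newton polygon of height $2(g+g')$.

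Next I would verify that $(g+g',k+k')$ satisfies \eqref{bound on k eq}. Adding the two hypotheses
\[
	\frac{2g}{d+1}-\frac{2p(p-1)}{d+1}\geq k\delta(p-1)
	\qquad\text{and}\qquad
	\frac{2g'}{d+1}-\frac{2p(p-1)}{d+1}\geq k'\delta(p-1)
\]
yields $\tfrac{2(g+g')}{d+1}-\tfrac{4p(p-1)}{d+1}\geq (k+k')\delta(p-1)$, which is strictly stronger than the bound $\tfrac{2(g+g')}{d+1}-\tfrac{2p(p-1)}{d+1}\geq (k+k')\delta(p-1)$ required to apply the theorem to $(g+g',k+k')$, the slack being $\tfrac{2p(p-1)}{d+1}>0$. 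Applying Theorem \ref{Theorem--many branch points and some are simple} (the $p\equiv 1\mod d$ case) with genus $g+g'$ and parameter $k+k'$ then produces a curve $C:=C_{g+g',k+k'}$ of genus $g+g'$ whose Newton polygon equals the amalgamation computed above, i.e.\ $NP(C)=NP(C_{g,k})\sqcup NP(C_{g',k'})$, as desired.

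I expect essentially no real obstacle: all of the substantive content is packaged inside Theorem \ref{Theorem--many branch points and some are simple}, and the only genuine verification is that the admissible region for $(g,k)$ is stable under addition, which the displayed inequality above establishes with room to spare. The one point worth stating carefully is the integrality of $\tfrac{k\delta(p-1)(d-1)}{2}$, handled above, so that the slope multiplicities are well defined.
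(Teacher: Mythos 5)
Your proposal is correct and takes the same approach as the paper, which simply takes $C = C_{g+g',k+k'}$. You add the (worthwhile, and omitted in the paper) check that $(g+g',k+k')$ still satisfies the admissibility inequality \eqref{bound on k eq}, along with the explicit count showing the slope-multiplicities of $NP(C_{g+g',k+k'})$ match the amalgamation.
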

			\begin{proof}
				Take $C$ to be $C_{g+g',k+k'}$. 
			\end{proof}
		
			\begin{corollary}\label{corollary: unlikely intersections}
				Set $d=2$. Let $g$ and $k$ be as in Theorem \ref{Theorem--many branch points and some are simple}. Define $m=\frac{p-1}{2}$. Let $\mathcal{N}_{g,k}$ denote the Newton polygon with slopes
				$\{0,1\}^{g-k(p-1)} \sqcup \{\frac{1}{2}\}^{2k(p-1)}$.
				If $km(km-1)>3g-g$, then $W_{\mathcal{N}_{g,k}}$ and $\mathcal{T}_g$ have an unlikely
				intersection.
			\end{corollary}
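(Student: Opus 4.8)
The plan is to produce a single point of $W_{\mathcal{N}_{g,k}}\cap \mathcal{T}_g$ and then check that the sum of the codimensions of $W_{\mathcal{N}_{g,k}}$ and $\mathcal{T}_g$ in $\mathcal{A}_g$ exceeds $\dim\mathcal{A}_g=\tfrac{g(g+1)}{2}$.

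\textbf{Step 1 (nonemptiness).} Since $g$ and $k$ satisfy \eqref{bound on k eq} with $d=2$, and hence $\delta=2$, Theorem \ref{Theorem--many branch points and some are simple} produces a curve $C_{g,k}$ of genus $g$ with $NP(C_{g,k})\succeq \{0,1\}^{g-k(p-1)}\sqcup\{\tfrac12\}^{2k(p-1)}=\mathcal{N}_{g,k}$. Its Jacobian is a principally polarized abelian variety of dimension $g$, so it determines a point of $\mathcal{T}_g$, and by the definition of $W_{\mathcal{N}_{g,k}}$ it lies in $W_{\mathcal{N}_{g,k}}$ as well. Thus $W_{\mathcal{N}_{g,k}}\cap\mathcal{T}_g\neq\emptyset$.

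\textbf{Step 2 (lattice-point count).} Next I would compute $\#\Omega(\mathcal{N}_{g,k})$. With $m=\tfrac{p-1}{2}$ we have $k(p-1)=2km$, and $\mathcal{N}_{g,k}$ has vertices $(0,0)$, $(g-k(p-1),0)$, $(g+k(p-1),k(p-1))$, $(2g,g)$; it is identically zero on $[0,g-k(p-1)]$ and has slope $\tfrac12$ on $[g-k(p-1),\,g+k(p-1)]$. Hence a lattice point with $x\le g-k(p-1)$ is never strictly below $\mathcal{N}_{g,k}$, while for $1\le s\le k(p-1)$ the column $x=g-k(p-1)+s$ contributes exactly the integers $0\le y<s/2$, i.e.\ $\lceil s/2\rceil$ points. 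Summing over $1\le s\le 2km$ gives $\#\Omega(\mathcal{N}_{g,k})=2(1+2+\cdots+km)=km(km+1)$. By Theorem \ref{Theorem: Oort's lattice count theorem}, and since $\mathcal{N}_{g,k}$ has integer vertices, the codimension of $W_{\mathcal{N}_{g,k}}$ in $\mathcal{A}_g$ is $km(km+1)$.

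\textbf{Step 3 (comparing codimensions).} Since $\dim\mathcal{T}_g=3g-3$, the codimension of $\mathcal{T}_g$ in $\mathcal{A}_g$ equals $\tfrac{g(g+1)}{2}-(3g-3)$. The hypothesis $km(km-1)>3g-3$ gives $km(km+1)>3g-3$, so the two codimensions add up to more than $(3g-3)+\left(\tfrac{g(g+1)}{2}-(3g-3)\right)=\dim\mathcal{A}_g$. Combined with Step 1, this says exactly that $W_{\mathcal{N}_{g,k}}$ and $\mathcal{T}_g$ have an unlikely intersection. The only step needing genuine care is the lattice count of Step 2 — in particular insisting on points lying \emph{strictly} below $\mathcal{N}_{g,k}$ and correctly treating the columns in which the polygon passes through a lattice point — and this is elementary. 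I do not expect any real obstacle: the corollary is a bookkeeping consequence of the Newton-over-Hodge construction in Theorem \ref{Theorem--many branch points and some are simple} together with Oort's codimension formula (Theorem \ref{Theorem: Oort's lattice count theorem}).
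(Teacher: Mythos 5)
Your proof takes essentially the same approach as the paper: produce a point of the intersection via Theorem \ref{Theorem--many branch points and some are simple}, count lattice points strictly below $\mathcal{N}_{g,k}$ to get the codimension of $W_{\mathcal{N}_{g,k}}$ via Theorem \ref{Theorem: Oort's lattice count theorem}, and compare with $\operatorname{codim}\mathcal{T}_g = \tfrac{g(g+1)}{2}-(3g-3)$.

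One substantive point: your column-by-column lattice count $\#\Omega(\mathcal{N}_{g,k}) = km(km+1)$ is correct, whereas the paper's proof arrives at $km(km-1)$ by counting $(km)^2$ lattice points in the triangle $T_{g,k}$ and subtracting $km$ points on the hypotenuse; both of those counts are off by one (the correct values are $(km+1)^2$ and $km+1$), and their difference is indeed $km(km+1)$. A sanity check confirming your answer: taking $g=k(p-1)$ so that $\mathcal{N}_{g,k}$ is the supersingular Newton polygon, the codimension of $W_{\mathcal{N}_{g,k}}$ should equal $\tfrac{g(g+1)}{2}-\lfloor g^2/4\rfloor = km(km+1)$, matching your count and not the paper's. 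Since $km(km+1) \geq km(km-1)$, the stated hypothesis $km(km-1) > 3g-3$ (reading the paper's ``$3g-g$'' as the evident typo for $3g-3$, as you did) still implies the required codimension inequality, so the corollary holds — and in fact under the weaker hypothesis $km(km+1) > 3g-3$. Your write-up is correct and cleaner on this point than the paper's own.
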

			
			\begin{proof}
				By Theorem \ref{Theorem--many branch points and some are simple} we know $W_{\mathcal{N}_{g,k}}$ and $\mathcal{T}_g$ have a nonempty intersection.
				We will compute
				the codimension of $W_{\mathcal{N}_{g,k}}$ in $\mathcal{A}_g$
				using Theorem \ref{Theorem: Oort's lattice count theorem}.
				Let $T_{g,k}$ be the triangle whose vertices are $(g-k(p-1),0)$,
				$(g,0)$ and $(g, \frac{k(p-1)}{2})$. We compute
				$\#(T_{g,k} \cap \Z \times \Z)=(km)^2$. Also,
				there are $km$ lattice points on the hypotenuse of $T_{g,k}$. 
				The codimension of 
				$W_{\mathcal{N}_{g,k}}$ in $\mathcal{A}_g$ is thus $km(km-1)$.
				The codimension of $\mathcal{T}_g$ in $\mathcal{A}_g$ is $\frac{g(g-1)}{2}-3g-3$,
				which proves the corollary.
			\end{proof}

			\begin{example} \label{example: main theorem 1}
				Consider the case where $d=2$ and let $\mathcal{N}=\{0,\frac{1}{2},1\}$.
				Corollary \ref{Theorem--many branch points} 
				tells us there exists a full family of curves $\mathcal{C}=\{C_g\}$,
				where $NP(C_g)$ only has slopes $0$,$1$, and $\frac{1}{2}$,
				and each occurs about a third of the time (with a constant error term). This was previously only known under the assumption $p\equiv 2 \mod 3$ (see \cite[Corollary 9.4]{Pries-Li-Wantovan_Tang-Newton_strat}).
				More generally, let $\epsilon\leq \frac{1}{3}$ be a rational number.
				There exists a full family $\mathcal{C}=\{C_g\}$, such that the
				Newton polygon of $C_g$ consists only of slopes $0,1$, and $\frac{1}{2}$
				and the multiplicity of $\frac{1}{2}$ is approximately $2g\epsilon$ (with a constant error term).
			\end{example}
			
			\begin{example} \label{example: main theorem 2}
				Consider the case where $d=3$ and $p\equiv 1 \mod 3$. Let
				$\mathcal{N}=\{0,\frac{1}{3},\frac{2}{3}, 1\}$. Corollary \ref{Theorem--many branch points} gives a full family of curves $\mathcal{C}=\{C_g\}$,
				where $NP(C_g)$ only has slopes $0$,$\frac{1}{3}$, $\frac{2}{3}$, and $1$.
				Furthermore, each slope occurs with approximately equal frequency.
				To the best of our knowledge, there were no previous examples of full or arithmetic families of
				curves with this property. There were, however, examples
				of arithmetic families of curves where the slopes $\frac{1}{3},\frac{2}{3}$ occurred
				with smaller frequencies (see \cite[\S 9.2]{Pries-Li-Wantovan_Tang-Newton_strat}). 
			\end{example}

		\section{Letting the ramification break tend to infinity} 
		\label{section: letting swan conductor go to infinity}

		\begin{theorem} \label{theorem: only one branch point}
			There exists a full family $\mathcal{C}=\{C_g\}$ such that
			\begin{align*}
				\mathbf{sNP}(\mathcal{C}) &\succeq P\Big (\frac{x^2}{4},2\Big ).
			\end{align*}
			In particular $\mathcal{C}$ is an unlikely family.
		\end{theorem}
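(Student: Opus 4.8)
The plan is to produce each $C_g$, for $g$ large, as a $\Z/p\Z$-cover of a curve of bounded genus that is ramified at one point with Swan conductor $d_g$, together with at most one extra point of Swan conductor $1$ used only to adjust the genus, and to arrange that $d_g=\tfrac{2g}{p-1}+O(1)\to\infty$. Applying Theorem \ref{Theorem: kramer-miller lower bound} to such a cover shows that $NP(C_g)$ lies above the Newton polygon $D_g$ whose slopes are $\tfrac{1}{d_g},\dots,\tfrac{d_g-1}{d_g}$, each with multiplicity $p-1$, plus $O(1)$ slopes equal to $0$ or $1$. Since $d_g\asymp g$, these middle slopes fill essentially the whole width $2g$ of $NP(C_g)$, and $D_g$ is a uniformly fine discretization of a parabola: I will show that the rescaled graph $\tfrac1g D_g$ converges uniformly on $[0,2]$ to $P(\tfrac{x^2}{4},2)$. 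Because $sNP(C_g)\succeq\tfrac1g D_g$ and the relation $\mathbf{sNP}(\mathcal C)\succeq P$ only asks for a $\liminf$ to be nonnegative, this yields the first assertion; the ``unlikely'' assertion is then immediate from Lemma \ref{lemma: unlikely family}, as $P(\tfrac{x^2}{4},2)$ is a basic graph with value $\tfrac14>0$ at $x=1$. For the finitely many remaining small $g$ one takes an arbitrary curve of genus $g$, which does not affect the $\liminf$.

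First I would do the genus bookkeeping that makes the family full. A $\Z/p\Z$-cover of a curve of genus $i$ ramified at one point of Swan conductor $d$ and at $s$ points of Swan conductor $1$ has genus $pi+\tfrac{(p-1)(d-1)}{2}+s(p-1)$ by Riemann--Hurwitz. Given $g$, pick $i\in\{0,\dots,\tfrac{p-1}{2}-1\}$ with $i\equiv g\pmod{\tfrac{p-1}{2}}$; since $p\equiv 1\pmod{\tfrac{p-1}{2}}$ this makes $M:=\tfrac{2(g-pi)}{p-1}$ a nonnegative integer, and it is large once $g$ is. One of the choices $(s,d)=(0,M+1)$ or $(s,d)=(1,M-1)$ has $d\ge 2$ and $p\nmid d$, and the resulting cover has genus exactly $g$ with $d=d_g=\tfrac{2g}{p-1}+O(1)$. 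Existence of the cover is standard: by Riemann--Roch one chooses an Artin--Schreier function on a curve of genus $i$ with a pole of order exactly $d_g$ at the main point (and an additional simple pole if $s=1$), and the cover is connected with the prescribed Swan conductors because $d_g$ is coprime to $p$ --- this is the same kind of construction used in the proof of Theorem \ref{Theorem--many branch points and some are simple}.

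The substantive step is the asymptotic estimate. The graph of $D_g$ has width $2g$: an initial flat stretch of total length $O(1)$, then consecutive segments of slope $\tfrac{1}{d_g},\dots,\tfrac{d_g-1}{d_g}$ each of horizontal length $p-1$, then a closing slope-$1$ stretch of length $O(1)$. For $x\in[0,1]$, the number of full middle segments to the left of horizontal position $gx$ is $\tfrac{gx}{p-1}+O(1)$, and summing the corresponding heights gives $\tfrac1g D_g(gx)=\tfrac{g}{2(p-1)d_g}\,x^2+O(1/g)$ uniformly; the bound $d_g=\tfrac{2g}{p-1}+O(1)$ then makes $\tfrac{g}{2(p-1)d_g}=\tfrac14+O(1/g)$, so $\tfrac1g D_g(gx)=\tfrac{x^2}{4}+O(1/g)$ uniformly on $[0,1]$. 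Since $D_g$ and the function $x^2/4$ satisfy the same Newton-polygon symmetry about $x=1$, the estimate extends uniformly to all of $[0,2]$, giving $sNP(C_g)(x)\ge \tfrac1g D_g(gx)\ge \tfrac{x^2}{4}-\tfrac{c}{g}$ for a constant $c$ independent of $g$, hence $\liminf_{g}\min_{x\in[0,2]}\bigl(sNP(C_g)(x)-\tfrac{x^2}{4}\bigr)\ge 0$, i.e.\ $\mathbf{sNP}(\mathcal C)\succeq P(\tfrac{x^2}{4},2)$. The point requiring the most care is the interaction of the two steps: the $O(1)$ errors in the \emph{index} of the active slope segment turn into $O(1/g)$ errors in $\tfrac1g D_g(gx)$ only after multiplication by $\tfrac{p-1}{d_g}=O(1/g)$, so the uniform limit genuinely needs $d_g$ controlled to within an additive constant, not merely up to $d_g\sim\tfrac{2g}{p-1}$; this is exactly what the genus bookkeeping delivers.
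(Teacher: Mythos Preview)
Your proof is correct and follows essentially the same strategy as the paper: both construct $C_g$ as a $\Z/p\Z$-cover of a bounded-genus base curve with one branch point of Swan conductor $d_g\sim \tfrac{2g}{p-1}$ (plus one auxiliary branch point to sidestep $p\mid d$), apply Theorem~\ref{Theorem: kramer-miller lower bound}, and then observe that the resulting lower bound rescales to $x^2/4$. The only differences are cosmetic bookkeeping---you take the base genus to be the residue $i$ of $g$ modulo $\tfrac{p-1}{2}$ and use an auxiliary point of Swan conductor $1$, whereas the paper chooses a slightly different base genus $u_i$ and uses an auxiliary point of conductor $2$---and you spell out the parabola asymptotic more carefully than the paper does.
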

	
		\begin{proof}
			Let $m=\frac{p-1}{2}$ and for each $i=0,\dots, m-1$, choose
			$u_i,v_i$ such that 
			\begin{align*}
				pu_i- (p-1) &=  i + mv_i.
			\end{align*}
			Let $X_i$ be a smooth ordinary curve of genus $u_i$.
			For $g\gg 0$, write $g=i+km$. We define a genus $g$ curve $C_g$
			as follows:
			\begin{enumerate}[label=\Roman*.]
				\item If $d=k-1-v_i$ is relatively prime to $p$, we choose
				a rational function $f_g$ on $X_i$ that has exactly one pole of 
				order $d$ (this is possible by Riemann-Roch). We let $C_g$ be the curve defined by the Artin-Schreier
				equation $y^p-y=f_g$. By the Riemann-Hurwitz formula we know 
				$C_g$ has genus $g$.
				\item If $d=k-1-v_i$ is divisible by $p$, we choose a rational
				function $f_g$ that has exactly two poles: one pole of of order
				$d-2$ and one pole of order $2$ (again, this is possible by Riemann-Roch).
				We let $C_g$ be the curve defined by the Artin-Schreier
				equation $y^p-y=f_g$. By the Riemann-Hurwitz formula we know 
				$C_g$ has genus $g$.
			\end{enumerate}
			The theorem then follows from Theorem \ref{Theorem: kramer-miller lower bound}.
			Indeed, by Theorem \ref{Theorem: kramer-miller lower bound} we know that
			$NP(C_g)$ is very close to the Newton polygon 
			$\{0,\frac{1}{g},\dots,\frac{g-1}{g} \}$. The latter polygon, when 
			scaled by $\frac{1}{g}$,
			approaches the graph $P\Big (\frac{x^2}{4},2\Big )$ as $g$ tends to infinity.
		\end{proof}

		\begin{question}
			Theorem \ref{theorem: only one branch point} allows for the possibility
			that $sNP(C_g)$ lies well above $P(\frac{x^2}{4},2)$.  
			Does there exist a full family of curves $\mathcal{C}=\{C_g\}_{g \in \Z_{\geq 0}}$ 
			such that $sNP(C_g)$ converges uniformly to $P(\frac{x^2}{4},2)$?
		\end{question}
		\begin{question}
			Does there exist a full or arithmetic family of curves $\mathcal{C}$ and a basic graph $P_0$ that lies strictly above $P(\frac{x^2}{4},2)$
			such that $\mathbf{sNP}(\mathcal{C}) \succeq P_0$? 
		\end{question}
		To the best of our knowledge, both questions
		are unknown even for arithmetic families. The arithmetic families constructed in \cite{Pries-Li-Wantovan_Tang-Newton_strat} have scaled Newton
		polygons whose limits are well below $P(\frac{x^2}{4},2)$. Similarly, the bounds in Theorem \ref{Theorem--many branch points} are well below $P(\frac{x^2}{4},2)$.

	\bibliographystyle{plain}
	\bibliography{bibliography.bib}

\begin{thebibliography}{10}

\bibitem{Achter-Pries}
Jeffrey~D. Achter and Rachel Pries.
\newblock Generic {N}ewton polygons for curves of given {$p$}-rank.
\newblock In {\em Algebraic curves and finite fields}, volume~16 of {\em Radon
  Ser. Comput. Appl. Math.}, pages 1--21. De Gruyter, Berlin, 2014.

\bibitem{Blache-Ferard-Newton_stratum}
R\'{e}gis Blache and \'{E}ric F\'{e}rard.
\newblock Newton stratification for polynomials: the open stratum.
\newblock {\em J. Number Theory}, 123(2):456--472, 2007.

\bibitem{Pries-Booher}
Jeremy Booher and Rachel Pries.
\newblock Realizing {A}rtin-{S}chreier covers of curves with minimal {N}ewton
  polygons in positive characteristic.
\newblock {\em Journal of Number Theory}, 2020.

\bibitem{Bouw}
Irene~I. Bouw.
\newblock The {$p$}-rank of ramified covers of curves.
\newblock {\em Compositio Math.}, 126(3):295--322, 2001.

\bibitem{Oort-problems_automorphisms_of_curves}
Gunther Cornelissen and Frans Oort.
\newblock Problems from the workshop on automorphisms of curves (leiden,
  august, 2004).
\newblock {\em Rendiconti del Seminario Matematico della Universit\`a di
  Padova}, 113:129--177, 2005.

\bibitem{Crew-p-covers}
Richard Crew.
\newblock Etale {$p$}-covers in characteristic {$p$}.
\newblock {\em Compositio Math.}, 52(1):31--45, 1984.

\bibitem{katz-slope_filtration}
Nicholas Katz.
\newblock Slope filtration of {$F$}-crystals.
\newblock {\em Ast{\'e}risque}, 63(1):979, 1979.

\bibitem{kramermiller-padic}
Joe Kramer-Miller.
\newblock $ p $-adic estimates of exponential sums on curves.
\newblock {\em arXiv preprint arXiv:1909.06905}, 2019.

\bibitem{Pries-Li-Wantovan_Tang-Newton_strat}
Wanlin Li, Elena Mantovan, Rachel Pries, and Yunqing Tang.
\newblock Newton polygon stratification of the {T}orelli locus in {PEL}-type
  {S}himura varieties.
\newblock {\em arXiv preprint arXiv:1811.00604}, 2018.

\bibitem{Li-Mantovan-Pries-Tang-special_families}
Wanlin Li, Elena Mantovan, Rachel Pries, and Yunqing Tang.
\newblock Newton polygons arising from special families of cyclic covers of the
  projective line.
\newblock {\em Res. Number Theory}, 5(1):Paper No. 12, 31, 2019.

\bibitem{Liu-Wei-witt-coverings}
Chunlei Liu and Dasheng Wei.
\newblock The {$L$}-functions of {W}itt coverings.
\newblock {\em Math. Z.}, 255(1):95--115, 2007.

\bibitem{Oort-Newton_polygon_strata_dimensions}
Frans Oort.
\newblock Newton polygon strata in the moduli space of abelian varieties.
\newblock In {\em Moduli of abelian varieties ({T}exel {I}sland, 1999)}, volume
  195 of {\em Progr. Math.}, pages 417--440. Birkh\"{a}user, Basel, 2001.

\bibitem{Pries-Newton_survey}
Rachel Pries.
\newblock Current results on {N}ewton polygons of curves.
\newblock In {\em Open problems in arithmetic algebraic geometry}, volume~46 of
  {\em Adv. Lect. Math. (ALM)}, pages 179--207. Int. Press, Somerville, MA,
  [2019] \copyright 2019.

\bibitem{Robba-lower_bounds_NP}
Philippe Robba.
\newblock Index of {$p$}-adic differential operators. {III}. {A}pplication to
  twisted exponential sums.
\newblock Number 119-120, pages 7, 191--266. 1984.
\newblock $p$-adic cohomology.

\bibitem{vandergeer-derVlugt-supersingular}
Gerard van~der Geer and Marcel van~der Vlugt.
\newblock On the existence of supersingular curves of given genus.
\newblock {\em J. Reine Angew. Math.}, 458:53--61, 1995.

\bibitem{Zhu-padic_variation_of_Lfunctions}
Hui~June Zhu.
\newblock {$p$}-adic variation of {$L$} functions of one variable exponential
  sums. {I}.
\newblock {\em Amer. J. Math.}, 125(3):669--690, 2003.

\end{thebibliography}

\end{document}